\newcommand{\R}{\mathbf{R}}
\newcommand{\N}{\mathbf{N}}
\newcommand{\T}{\mathbf{T}}
\newcommand{\del}{\partial}
\newcommand{\eps}{\varepsilon}
\newcommand{\bx}{\mathbf{x}}
\newcommand{\by}{\mathbf{y}}
\newcommand{\e}{\text{e}}
\renewcommand{\d}{\,\text{d}}
\renewcommand{\i}{\text{i}}
\DeclareMathOperator{\supp}{supp}
\newtheorem{theorem}{Theorem}[section]
\newtheorem{lemma}[theorem]{Lemma}
\newtheorem{proposition}[theorem]{Proposition}
\newtheorem{corollary}[theorem]{Corollary}
\newtheorem{remark}[theorem]{Remark}
\newtheorem{definition}[theorem]{Definition}
\numberwithin{equation}{section}
\let\@fnsymbol\@arabic
\title{Stability estimates with a priori bound for the inverse local Radon transform}
\author{Joel Andersson\thanks{Email: \protect\url{joelan@math.su.se}} }
\author{Jan Boman\thanks{Email: \protect\url{jabo@math.su.se}}}
\affil{\textsc{Department of Mathematics, Stockholm University, SE-10691 Stockholm, Sweden}}
\date{\today}
\begin{document}
\maketitle
\begin{abstract}
We consider the inverse problem for the $2$-dimensional weighted local Radon transform $R_m[f]$, where $f$ is supported in $y\geq x^2$ and $R_m[f](\xi,\eta)=\int f(x, \xi x + \eta) m(\xi, \eta, x)\d x$ is defined near $(\xi,\eta)=(0,0)$. For weight functions satisfying a certain differential equation we give weak estimates of $f$ in terms of $R_m[f]$ for functions $f$ that satisfies an a priori bound.
\end{abstract}

{\bf Keywords:} Radon transform, weighted Radon transform, local injectivity, stability estimates

\section{Introduction}
The weighted Radon transform is defined by
\begin{equation}
R_m[f](\xi,\eta)=\int_\R f(x,\xi x+\eta)m(x,\xi,\eta)\d x, \label{eq:WRT}
\end{equation}
for suitable functions $f=f(x,y)$ and $m=m(x,\xi,\eta)\geq0$. The case $m\equiv1$ corresponds to the ordinary Radon transform,
\begin{equation}
R[f](\xi,\eta)=\int_\R f(x,\xi x+\eta)\d x, \label{eq:RT}
\end{equation}
in which case its value at $(\xi,\eta)$ is given by the integral of $f$ over the line $\{(x,y);y=\xi x+\eta\}$. I.e.\ we integrate $f$ over the line with slope given by the $\xi$-parameter and intersecting the $y$-axis at $y=\eta$.
 
The question of invertibility of $R_m$ has been considered for a long time. Novikov \cite{Nov1} solved the problem when $m$ is an \emph{attenuation}, i.e.
\begin{equation}
m(x,\xi,\eta)=\exp\left(-\int_x^\infty \mu(t,\xi t+\eta)\d t\right),\label{eq:att}
\end{equation}
for compactly supported $\mu\in C^{0,\alpha}(\R^2)$. Independently, work by Arbuzov, Bukhgeim and Kazantsev could also be seen to imply an inversion formula \cite{ArBuKa}. After this was presented many related results and improvements followed, 
\cite{Natt1,BoStr,Bal1,BalTa,Finch1,KazBuk,Fokas,Gindikin,StrA}.

Strichartz, in \cite{Strichartz}, showed that the Radon transform is \emph{locally injective}, in the sense that if $\supp f\subset\{(x,y);y\geq x^2\}$ and the line integrals as defined in \eqref{eq:RT} are zero in a $(\xi,\eta)$-neighborhood of the origin, then $f=0$ in some neighborhood of the origin in the $(x,y)$-plane.

The corresponding statement unfortunately does not hold in general for the weighted Radon transform. In \cite{Bo3}, an example of a smooth $m\geq0$ is given for which $R_m$ is not locally injective. However, in the case when $m$ is real-analytic it is still true that the weighted Radon transform is locally injective, \cite{BoQu}. In \cite{Bo1} the class of weights for which the same conclusion holds was extended to smooth weights that satisfy an additional condition, first introduced by Gindikin, \cite{Gindikin}. See also \cite{Bo2,Bo4,MarkoeQuinto,Hertle} for more discussions on the local injectivity problem. It is however still not known how large the subspace of smooth weights is for which local injectivity holds.

In practice it is hard to verify local injectivity. Furthermore, no reasonable stability estimate (see further our example on Hölder versus logarithmic continuity at the end of section \ref{sec:summary}) will hold without an a priori bound on the data, even for constant weights. This is also the case for analytic continuation, \cite{FJohn}. Neither will even weaker Sobolev estimates be valid in the local problem, such as
\[
\|f\|_{H^s(\R^2)}\lesssim\|R[f]\|_{H^{0,t}(\T\times\R)},\quad t>s.
\]
The same thing is true for the so-called exterior problem. 

Recently, Caro, Ferreira and Ruiz in \cite{Ruiz}, Theorem 2.5(c), proved an estimate of very similar type as we are about to achieve, but only for the ordinary Radon transform. Bukhgeim has also made a contribution in this direction when $m$ is analytic, \cite{BuSuppl}. Finally we mention \cite{RQCRT} where Rullgård and Quinto presented quantitative Sobolev-type estimates with a remainder term.
\subsection{Main results}
\label{sec:summary}
We will consider the local stability problem for the weighted Radon transform as defined in \eqref{eq:WRT}. Assume that $f=f(x,y)$ satisfies the a priori bound $\|f\|_{C^{0,\alpha}(\R^2)}\leq C_0$ with $\supp f\subset\{(x,y);y\geq x^2\}$ (both of these conditions can be relaxed as we shall see, but doing so would introduce some rather unnecessary technicalities at this stage). The weight functions $m=m(x,\xi,\eta)$ will be assumed to be of very certain types. We always assume that for some functions $a=a(\xi,\eta)$ and $b=b(\xi,\eta)$, $m$ solves the partial differential equation
\begin{equation}
\del_\xi m(x,\xi,\eta)-x\del_\eta m(x,\xi,\eta)=(xa(\xi,\eta)+b(\xi,\eta))m(x,\xi,\eta).\label{eq:boman_cond}
\end{equation}
Condition \eqref{eq:boman_cond} also appears in \cite{Gindikin,Bo1,Bo4}. Weights that satisfy this condition can also be interpreted as attenuations, but in dual coordinates. We will consider two cases for the functions $a,b$. First that they are real analytic and then that they belong to a Gevrey space, $G_0^\sigma(\R^2)$. These spaces are defined in detail in the appendix, section \ref{sec:Gevrey}.

We will start by deriving estimates for certain means, $\mathcal{M}_{\eps,\gamma}[f]$ or $\mathcal{M}_{\eps,\gamma}[fm_\gamma]$, that we define in detail in section \ref{s:means}. From these estimates we are then able to deduce various estimates for $f$ or $fm_\gamma$.

Basically, for a fixed $x$, $\mathcal{M}_{\eps,\gamma}[fm_\gamma](x)$ is a mean of $fm_\gamma$ over the vertical interval $\{y;|y-\gamma|\leq\eps|x|\}$ defined by a convolution with a test function. The subscript $\gamma$ on the weight $m$ is a small technicality that indicates a certain correction that must be made in our arguments.

Since the intervals over which the means are computed will be small one can expect $\mathcal{M}_{\eps,\gamma}[fm_\gamma](x)$ to be close to $f(x,\gamma)m(x,0,\gamma)$. So from a practical viewpoint, also the estimates for these means are of some interest.
 
Our first result holds when $m$ is constant or $a,b$ are real analytic. It says that
\begin{equation}
\|\mathcal{M}_{\eps,\gamma}[fm_\gamma]\|_2\leq C\left(\frac{1}{\log(\|R_m[f]\|_{\eps,\gamma}^{-1})}\right)^{\alpha},\label{mainRT}
\end{equation}
where $\alpha>0$ is a constant depending on the regularity of $\mathcal{M}_{\eps,\gamma}[fm_\gamma]$, $\|\cdot\|_2$ denotes the usual $L^2$-norm and $\|\cdot\|_{\eps,\gamma}$ is a certain norm of the data. The constant $C>0$ will depend on the a priori bound of $\|f\|_{C^{0,\alpha}(\R^2)}$. 

We can conclude from \eqref{mainRT} that as $R_m[f]\to0$, $\mathcal{M}_{\eps,\gamma}[fm_\gamma]\to0$. From \eqref{mainRT} it is not very hard to derive the more interesting estimate
\begin{equation}
\|fm_\gamma\|_2\leq C\left(\frac{\log\log(\|R_m[f]\|_{\eps,\gamma}^{-1})}{\log(\|R_m[f]\|_{\eps,\gamma}^{-1})}\right)^{\alpha},\label{mainfRT}
\end{equation}
but sadly we get an additional $\log\log$-factor in this transition. 

For $\alpha>1/2$ we can also get a supremum estimate of the type in \eqref{mainRT}. However, the exponent $\alpha$ in the right hand side must be replaced by $\rho=\alpha-1/2$ and the norm on the left hand side must be replaced with a certain supremum norm of $f(\cdot,\gamma)m(\cdot,0,\gamma)$.

In the more general setting where $m$ is of Gevrey type, but still satisfies \eqref{eq:boman_cond}, we get the estimate
\begin{equation}
\|\mathcal{M}_{\eps,\gamma}[fm_\gamma]\|_2\leq C\left(\frac{\log\log(\|R_m[f]\|_{\eps,\gamma}^{-1})}{\log(\|R_m[f]\|_{\eps,\gamma}^{-1})}\right)^{\alpha},\label{mainWRT}
\end{equation}
where all constants fulfill similar conditions as in estimate \eqref{mainRT}. In an analogous way we can then get a supremum (for $\alpha>1/2$) or $L^2$-estimate of $fm_\gamma$. The estimate corresponding to \eqref{mainfRT} will be
\begin{equation}
\|fm_\gamma\|_2\leq C\left(\frac{\log^2\log(\|R_m[f]\|_{\eps,\gamma}^{-1})}{\log(\|R_m[f]\|_{\eps,\gamma}^{-1})}\right)^{\alpha}.\label{mainfWRT}
\end{equation}
Both \eqref{mainfRT}, \eqref{mainfWRT} can give us relevant information regarding the local injectivity question though. For example if $R_{m}[f]=0$ in some neighborhood of the origin and $m>0$ we would be able to conclude that $f=0$ in some neighborhood of the origin.

The outline of this paper will be that we will first define the means $\mathcal{M}_{\eps,\gamma}$ and derive some basic properties for them. Then we start by considering the case of the standard Radon transform \eqref{eq:RT}, where $m\equiv1$, and derive a stability estimate for it. This will illustrate the fundamental ideas involved in the proofs of \eqref{mainWRT} and \eqref{mainRT}.

The first key ingredient will be moment estimates of the means $\mathcal{M}_{\eps,\gamma}[fm_\gamma]$, summarized in Lemma \ref{l:mom0}. These will allow us to get estimates for coefficients in the expansions of the means in Fourier-Legendre series. The estimates will be of type
\begin{equation}
|a_n|\leq C^n\|R[f]\|,\label{constbds1}
\end{equation}
with some apropriate norm on the data $R[f]$. The a priori bound $\|f\|_{C^{0,\alpha}(\R^2)}\leq C_0$ will furthermore imply that $|a_n|\leq M/n^\alpha$, where $M$ depends on $C_0$. Using that the coefficients tend to zero in this way, together with \eqref{constbds1}, we will then be able to derive the desired estimates.
 
We would also like to mention that we have been, to some extent, inspired by arguments found in John's paper \cite{FJohn} on continuous dependence on data for solutions of partial differential equations with a prescibed bound. As a related remark, in the case of the Radon transform we do not expect any better than logarithmic continuity to describe the dependence between $f$ and the data $R[f]$. Without sketching all the details, consider for some arbitrary $\lambda>0$ and $q\in C_0^\infty(\R^2)$ the function
\[
f_\lambda(x,y)=q(x,y)\frac{\cos(\lambda x)}{\lambda}.
\]
Then one can verify that $\|f_\lambda\|_{C^{0,1}}\leq M=\sup|q|$, i.e. $f_\lambda$ satisfies a Lipschitz condition and $\|f_\lambda\|_2\approx\frac1\lambda$. The Radon transform of $f_\lambda$ is
\[
R[f_\lambda](\xi,\eta)=\frac1\lambda\int_\R q(x,\xi x+\eta)\cos(\lambda x)\d x.
\]
Doing $p-1$ integrations by parts (compare with \cite{Bo3,Bo2}) one can verify that
\[
|R[f_\lambda]|\leq C_p\lambda^{-p},
\]
holds for arbitrary integers $p$ and thus $\|R[f_\lambda]\|_2\approx C_p\lambda^{-p}$ (with a possibly new choice of constant $C_p$). So by choosing $p$ large enough we see that no inequality of the form
\[
\|f_\lambda\|_2\leq C\|R[f_\lambda]\|_2^\alpha,
\]
can hold for any $\alpha>0$.

\section{The means $\mathcal{M}_{\eps,\gamma}$}
\label{s:means}
Recall the definition of a function $f$ being Hölder continuous on an open subset $\Omega\in\R^n$ if there exists a $C_0>0$ and $0<\alpha\leq1$ such that for all $\bx,\by\in\Omega$,
\[
|f(\bx)-f(\by)|\leq C_0\|\bx-\by\|^\alpha.
\]
In the case $\alpha=1$, $f$ is called Lipschitz continuous. One defines the Hölder space $C^{0,\alpha}(\Omega)$, consisting of the complex-valued, Hölder continuous functions on $\Omega$. There is also an associated semi-norm
\[
\|f\|_{C^{0,\alpha}(\Omega)}:=\sup_{\substack{\bx,\by\in\Omega \\ \bx\neq\by}}\frac{|f(\bx)-f(\by)|}{\|\bx-\by\|^\alpha}.
\]

From functions $f$, or $fm$ for weights $m$, we can construct means $\mathcal{M}_{\eps,\gamma}[f]$ by convolving with a test function. Later we will make further restrictions on these test functions.

\begin{definition}\label{d:hor}
Suppose that $0\leq\varphi\in C_0^\infty[-1,1]$ is even and $\int\varphi=1$. Denote by $\varphi_{\eps|x|}(\xi)=\frac1{\eps|x|}\varphi\left(\frac\xi{\eps|x|}\right)$ where $\eps>0,x\neq0$. Then for continuous functions $f=f(x,y)$, we define for $x\neq0$,
\begin{equation}\label{mean1}
\mathcal{M}_{\eps,\gamma}[f](x)=\mathcal{M}_{\varphi,\eps,\gamma}[f](x)=f\circledast_y\varphi_{\eps|x|}(x,\gamma)=\int_{|y-\gamma|\leq\eps|x|} f(x,y)\varphi_{\eps|x|}(\gamma-y)\d y.
\end{equation}
For $x=0$, we choose to define $\mathcal{M}_{\eps,\gamma}[f](0)=f(0,\gamma)$.
\end{definition}
By $\circledast_y$ we mean convolution in the $y$-variable. Observe that, as a function of $(x,y)$, the support of $\varphi_{\eps|x|}(\gamma-y)$ is contained in the conic set $C_{\eps,\gamma}=\{(x,y);|y-\gamma|\leq\eps|x|\}$. The situation is illustrated in Figure \ref{fig:supp3}.
\begin{figure}[h!]
\centering
\includegraphics[scale=0.8]{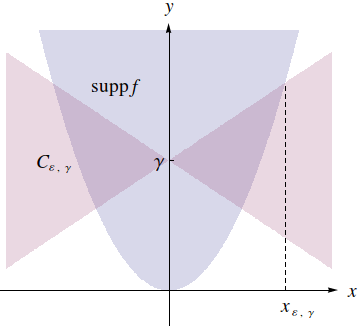}
\caption{The support of $f$ (blue parabola) and the set $C_{\eps,\gamma}$ (pink wedge) containing the supports of the cut-off functions $\varphi_{\eps|x|}(\gamma-y)$. The positive solution $x_{\eps,\gamma}$ of the equation $x^2=\eps x+\gamma$ implies thus $\supp\mathcal{M}_{\eps,\gamma}[f]\subset[-x_{\eps,\gamma},x_{\eps,\gamma}]$.}
\label{fig:supp3}
\end{figure}

When we also have a weight function $m=m(x,\xi,\eta)$ involved, the mean \eqref{mean1} need to be slightly modified to
\begin{equation}\label{mean2}
\mathcal{M}_{\eps,\gamma}[fm_\gamma](x)=fm_\gamma\circledast_y\varphi_{\eps|x|}(x,\gamma)=\int_{|y-\gamma|\leq\eps|x|} f(x,y)m_\gamma(x,y)\varphi_{\eps|x|}(\gamma-y)\d y,
\end{equation}
and for $x=0$, we set $\mathcal{M}_{\eps,\gamma}[fm_\gamma](0)=f(0,\gamma)m(0,0,\gamma)$. Here 
\begin{equation}
m_\gamma(x,y)=m\left(x,\frac{y-\gamma}{x},\gamma\right),\quad x\neq0.\label{m_gamma}
\end{equation}
That we have to modify the weight function $m$ in this way will become clear later when we see how these means appear when we consider the weighted Radon transform.

\subsection{Convergence and regularity}
In this part we will prove some convergence and regularity results for $\mathcal{M}_{\eps,\gamma}[f]$, but the statements also hold, with only minor changes, for $\mathcal{M}_{\eps,\gamma}[fm_\gamma]$.

First observe that the support of $\mathcal{M}_{\eps,\gamma}[f]$ is for fixed $\gamma\in\R$ contained within the interval $\{x;|x|\leq x_{\eps,\gamma}\}$, see Figure \ref{fig:supp3}. We will assume that $\eps>0$ and $\gamma>0$ are so small that $x_{\eps,\gamma}\leq1$ and consider $\mathcal{M}_{\eps,\gamma}[f]$ as a function over $[-1,1]$. $\mathcal{M}_{\eps,\gamma}[f](x)\to f(x,\gamma)$ uniformly on compact subsets as $\eps\to0$ since $f$ is continuous. Proposition \ref{prop:pconv} gives a result on the rate of convergence when $f$ is Hölder continuous.

\begin{proposition}\label{prop:pconv}
Suppose $\|f\|_{C^{0,\alpha}(\R^2)}\leq C_0$, then for every $\gamma\in\R, x\in\R, \eps>0$,
\[
|\mathcal{M}_{\eps,\gamma}[f](x)-f(x,\gamma)|\leq C_0(\eps|x|)^\alpha\leq C_0\eps^\alpha.
\]
For smooth weights $m$, the same conclusion holds for  $fm_\gamma$ in place of $f$.
\end{proposition}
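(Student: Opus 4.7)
My plan is to exploit the fact that $\varphi_{\eps|x|}$ is a probability density: since $\int \varphi_{\eps|x|}(\gamma - y)\d y = 1$, I can rewrite
\[
\mathcal{M}_{\eps,\gamma}[f](x) - f(x,\gamma) = \int_{|y-\gamma|\leq\eps|x|} \bigl(f(x,y) - f(x,\gamma)\bigr)\varphi_{\eps|x|}(\gamma-y)\d y,
\]
for $x \neq 0$. This reduces the claim to a pointwise bound on the integrand.

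Next I would invoke the Hölder hypothesis $\|f\|_{C^{0,\alpha}(\R^2)}\leq C_0$ applied to the two points $(x,y)$ and $(x,\gamma)$, giving $|f(x,y)-f(x,\gamma)|\leq C_0|y-\gamma|^\alpha$. On the support of $\varphi_{\eps|x|}(\gamma - \cdot)$ one has $|y-\gamma|\leq\eps|x|$, so the integrand is bounded in absolute value by $C_0(\eps|x|)^\alpha \varphi_{\eps|x|}(\gamma - y)$. Integrating and using $\int\varphi_{\eps|x|} = 1$ yields $C_0(\eps|x|)^\alpha$. The case $x=0$ is immediate from the definition $\mathcal{M}_{\eps,\gamma}[f](0) = f(0,\gamma)$, so both sides vanish. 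The bound $C_0(\eps|x|)^\alpha \le C_0\eps^\alpha$ follows from the standing restriction $|x|\le x_{\eps,\gamma}\le 1$ noted just before the proposition.

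For the weighted case, the same argument goes through provided one replaces $f$ by $fm_\gamma$ and verifies an analogous pointwise Hölder estimate on the truncated domain $|y-\gamma|\leq\eps|x|$. Write
\[
(fm_\gamma)(x,y) - (fm_\gamma)(x,\gamma) = \bigl(f(x,y)-f(x,\gamma)\bigr)m_\gamma(x,y) + f(x,\gamma)\bigl(m_\gamma(x,y)-m_\gamma(x,\gamma)\bigr),
\]
and observe that on $|y-\gamma|\leq\eps|x|$ the argument $(y-\gamma)/x$ of $m$ remains in the bounded interval $[-\eps,\eps]$, so that $m_\gamma$ is uniformly bounded and, as a composition of a smooth function with the smooth map $(x,y)\mapsto((y-\gamma)/x,\gamma)$ on the wedge, Lipschitz in $y$ with a bound proportional to $1/|x|$. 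The factor $1/|x|$ is compensated by the extra $|y-\gamma|\leq\eps|x|$, giving Hölder control of $m_\gamma$ on the relevant set by a constant times $(\eps|x|)^\alpha$. Combining the two terms gives the same estimate (up to constants absorbed into $C_0$).

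I don't foresee a real obstacle here: the argument is a standard mollifier-minus-identity estimate on a Hölder function. The only slightly delicate point is the weighted version, where one must notice that the singular-looking weight $m_\gamma$ is actually harmless on the thin wedge $|y-\gamma|\le\eps|x|$ over which the convolution takes place.
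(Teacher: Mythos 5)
Your proof of the unweighted case is exactly the paper's argument: write the difference as $\int (f(x,y)-f(x,\gamma))\varphi_{\eps|x|}(\gamma-y)\,\d y$ using $\int\varphi_{\eps|x|}=1$, bound the integrand by $C_0|y-\gamma|^\alpha\le C_0(\eps|x|)^\alpha$ on the support, and handle $x=0$ by the definition. Nothing to add there.

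For the weighted case the paper offers no details, so your sketch is genuinely extra content, and it is essentially sound except for one quantitative overstatement. From $\del_y m_\gamma(x,y)=\tfrac1x\,\del_\xi m\left(x,\tfrac{y-\gamma}{x},\gamma\right)$ you get $|m_\gamma(x,y)-m_\gamma(x,\gamma)|\le \sup|\del_\xi m|\cdot\tfrac{|y-\gamma|}{|x|}\le C\eps$ on the wedge; the factor $1/|x|$ is cancelled by $|y-\gamma|\le\eps|x|$, but what survives is $\eps$, \emph{not} $(\eps|x|)^\alpha$. For small $|x|$ one has $\eps\gg(\eps|x|)^\alpha$, so your claim that the $m_\gamma$-increment is ``Hölder controlled by a constant times $(\eps|x|)^\alpha$'' is false as stated. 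What you do get is $C\eps\le C\eps^\alpha$ (since $\eps<1$, $\alpha\le1$), so the uniform conclusion $|\mathcal{M}_{\eps,\gamma}[fm_\gamma](x)-f(x,\gamma)m(x,0,\gamma)|\le C\eps^\alpha$ — which is the only form used later, e.g.\ in Theorem \ref{f_stability} and its weighted analogue — holds with a constant depending on $C_0$, $\sup|f|$ and $m$. If you wanted to recover the finer $x$-dependent bound you would need more, e.g.\ to exploit the evenness of $\varphi$ to kill the first-order term in $(y-\gamma)/x$, and even that only improves $\eps$ to $\eps^2$, not to $(\eps|x|)^\alpha$. So: state the weighted conclusion with $\eps^\alpha$ only, or justify the sharper form differently.
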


\begin{proof}
Since $\varphi_{\eps|x|}(\gamma-y)$ is an approximation to the identity
\begin{multline*}
|\mathcal{M}_{\eps,\gamma}[f](x)-f(x,\gamma)|=\left|\int_{|y-\gamma|\leq\eps|x|}[f(x,y)-f(x,\gamma)]\varphi_{\eps|x|}(\gamma-y)\d y\right|\\
\leq\int|f(x,y)-f(x,\gamma)|\varphi_{\eps|x|}(\gamma-y)\d y\leq C_0\int|y-\gamma|^\alpha\varphi_{\eps|x|}(\gamma-y)\d y\\
\leq C_0(\eps|x|)^\alpha\int \varphi_{\eps|x|}(\gamma-y)\d y=C_0(\eps|x|)^\alpha\leq C_0\eps^\alpha,
\end{multline*}
since
\[
\supp f(x,y)\varphi_{\eps|x|}(\gamma-y)\subset\{(x,y);|y-\gamma|\leq\eps|x|,|x|\leq x_{\eps,\gamma}\leq1\}.
\]
\end{proof}
One also has that if $f\in C^{0,\alpha}(\R^2)$, then $\mathcal{M}_{\eps,\gamma}[f]\in C^{0,\alpha}(\R)$ by the following theorem:

\begin{theorem}\label{t:preg}
If $\|f\|_{C^{0,\alpha}(\R^2)}\leq C_0$, then for every $\gamma\in\R, 0<\eps<1$ the mean values $\mathcal{M}_{\eps,\gamma}[f]$ belong to $C^{0,\alpha}(\R)$ and
\[
|\mathcal{M}_{\eps,\gamma}[f](x)-\mathcal{M}_{\eps,\gamma}[f](x')|\leq \sqrt{2}C_0(|x-x'|)^\alpha.\label{ineqpreg}
\]
For smooth weights $m$, the same conclusion holds for $fm_\gamma$ in place of $f$.
\end{theorem}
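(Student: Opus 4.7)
The plan is to pass to a representation in which the test function no longer depends on $x$. Substituting $s=(\gamma-y)/(\eps|x|)$ in \eqref{mean1} and using that $\varphi$ is even gives
\[
\mathcal{M}_{\eps,\gamma}[f](x) = \int_{-1}^{1} f(x,\gamma+\eps|x|s)\,\varphi(s)\d s,
\]
a formula valid for $x\neq 0$ that, thanks to $\int\varphi=1$ and continuity of $f$, also extends continuously to $x=0$ with the value $f(0,\gamma)$ prescribed in Definition \ref{d:hor}. This removes the $x$-dependence from $\varphi$ and places all of it inside $f$, which is the key to the estimate.

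With this representation, for $x,x'\in\R$ I would pull the difference inside the integral,
\[
|\mathcal{M}_{\eps,\gamma}[f](x)-\mathcal{M}_{\eps,\gamma}[f](x')| \leq \int_{-1}^{1} \bigl|f(x,\gamma+\eps|x|s)-f(x',\gamma+\eps|x'|s)\bigr|\,\varphi(s)\d s,
\]
and estimate each pointwise difference via the a priori Hölder bound. The two sample points in $\R^2$ differ by $(x-x',\,\eps(|x|-|x'|)s)$, whose Euclidean norm is at most $|x-x'|\sqrt{1+\eps^2 s^2}\leq |x-x'|\sqrt{1+\eps^2}$, using the reverse triangle inequality $\bigl||x|-|x'|\bigr|\leq|x-x'|$ and $|s|\leq 1$. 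The Hölder bound then yields an integrand of size at most $C_0(|x-x'|\sqrt{1+\eps^2})^{\alpha}$. Because $\eps<1$ and $0<\alpha\leq 1$ we have $(1+\eps^2)^{\alpha/2}\leq 2^{\alpha/2}\leq\sqrt{2}$, and $\int\varphi=1$ delivers the stated constant $\sqrt{2}\,C_0$.

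For the weighted version, the same substitution yields the integrand $f(x,\gamma+\eps|x|s)\,m(x,\eps\,\mathrm{sgn}(x)s,\gamma)$ (the expression $\eps|x|s/x$ collapses to $\eps\,\mathrm{sgn}(x)s$ and is bounded). Given smooth $m$ and the compact range of $s$ and $x$, this product is Hölder-$\alpha$ in $x$ on a fixed compact set, with constant controlled by $C_0$ together with $\sup|m|$ and bounds on the first $x$- and $\xi$-derivatives of $m$. Repeating the pointwise estimate gives the analogous conclusion, with a constant replacing $\sqrt{2}\,C_0$ that depends on $m$ as well.

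The only genuine obstacle is bookkeeping: verifying that the substitution correctly absorbs the $|x|$-scaling, using $\bigl||x|-|x'|\bigr|\leq|x-x'|$ to control the $y$-displacement, and checking the arithmetic $(1+\eps^2)^{\alpha/2}\leq\sqrt{2}$ under the hypotheses $\eps<1$ and $\alpha\leq 1$. No estimate beyond the a priori Hölder bound on $f$ (and, for the weighted case, standard derivative bounds on $m$) is needed.
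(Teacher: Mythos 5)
Your proposal is correct and follows essentially the same route as the paper: the change of variables $s=(\gamma-y)/(\eps|x|)$ to make the test function $x$-independent, the reverse triangle inequality $\bigl||x|-|x'|\bigr|\leq|x-x'|$ to control the vertical displacement, and the bound $(1+\eps^2)^{\alpha/2}\leq\sqrt{2}$ to obtain the constant $\sqrt{2}C_0$ (the paper handles $x=0$ via Proposition \ref{prop:pconv} rather than by continuous extension of the representation, an immaterial difference). The only point worth a second look is the weighted case, where $m(x,\eps\,\mathrm{sgn}(x)s,\gamma)$ is discontinuous in $x$ at $x=0$; the estimate survives because $\varphi$ is even so the $s$-integral pairs $s$ with $-s$, but the paper is equally terse here.
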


\begin{proof}
The case $x=0$ or $x'=0$ follows from Proposition \ref{prop:pconv}, so consider the case $x\neq0\neq x'$. By changing variables
\begin{multline*}
|\mathcal{M}_{\eps,\gamma}[f](x)-\mathcal{M}_{\eps,\gamma}[f](x')|=\\
=\left|\int_{\R}f(x,y)\frac1{\eps|x|}\varphi\left(\frac{y-\gamma}{\eps|x|}\right)-f(x',y)\frac1{\eps|x'|}\varphi\left(\frac{y-\gamma}{\eps|x'|}\right)\d y\right|\\
=\left|\int_{-1}^1f(x,\eps|x|t+\gamma)\varphi(t)-f(x',\eps|x'|t+\gamma)\varphi(t)\d t\right|\\
\leq\int_{-1}^1\big|f(x,\eps|x|t+\gamma)-f(x',\eps|x'|t+\gamma)\big|\varphi(t)\d t\\
\leq C_0\int_{-1}^1\|(x-x',\eps t(x-x'))\|^\alpha\varphi(t)\d t\leq C_0(\sqrt{1+\eps^2}|x-x'|)^\alpha\leq\sqrt{2}C_0(|x-x'|)^\alpha.
\end{multline*}
\end{proof}
\begin{remark}
In the proof of Proposition \ref{prop:pconv}, we really only used Hölder continuity in the $y$-variable, and in the proof of Theorem \ref{t:preg} we could manage with a uniform Hölder condition along lines with slope smaller than $\eps>0$. So these are obvious relaxations that can be made in the statements. 
\end{remark}

\section{Estimate for the standard Radon transform}
\label{sec:1}
In this section we will prove the stability estimates \eqref{mainRT} and \eqref{mainfRT} for constant $m$. The standard Radon (or X-ray) transform in the plane is defined by
\begin{equation}
R[f](\xi,\eta)=\int_\R f(x,\xi x+\eta)\d x,\label{def:RT}
\end{equation}
for suitable functions $f=f(x,y)$. We will assume that $\supp f\subset\{(x,y);y\geq x^2\}$ and that an a priori bound on $f$ of type $\|f\|_{C^{0,\alpha}(\R^2)}\leq C_0$ holds (or a similar Hölder condition for all lines with small slope). The former assumption can be seen to imply that $\supp R[f]\subset\{(\xi,\eta);\eta\geq-\frac{\xi^2}4\}$, c.f.\ Figure \ref{fig:supp1}. 

The dual Radon transform is defined by 
\begin{equation}
R^*[\varphi](x,y)=\int_\R\varphi(\xi,y-\xi x)\d\xi\label{def:DRT}
\end{equation}
for suitable functions $\varphi=\varphi(\xi,\eta)$. If we for the moment only assume that $\varphi$ is such that $\supp\varphi\, \cap\supp R[f]$ and $\supp R^*[\varphi]\, \cap\supp f$ is compact one can easily show that $R^*$ is the proper adjoint of $R$, that is
\[
\langle R[f],\varphi\rangle=\langle f,R^*[\varphi]\rangle
\]
whenever $R[f]\varphi\in L_{\text{loc}}^1(\R^2)$ and $fR^*[\varphi]\in L_{\text{loc}}^1(\R^2)$.

Observe now that if $f$ would be sufficiently regular
\[
\del_\eta R[xf](\xi,\eta)=\int_\R x\,\del_y f(x,\xi x+\eta)\d x=\int_\R\del_\xi f(x,\xi x+\eta)\d x=\del_\xi R[f](\xi,\eta),
\]
where by $\del_\eta$ we mean partial derivatives $\frac{\del}{\del\eta}$ etc. Iterating this gives the important identity
\begin{equation}
\del_\eta^kR[x^kf]=\del_\xi^kR[f],\label{id:1}
\end{equation}
which also holds in the sense of distributions. Hence it will be applicable also in our case.

Before moving on we would like to emphasize that the basic ideas will be the same also when we introduce a weight. Achieving corresponding moment estimates will however be more tricky due to that an identity corresponding to  \eqref{id:1} will not hold. 

\subsection{Moment estimates I}\label{sec:1.1}
A key ingredient in our proofs will be certain moment estimates. Consider first, 
\[
R[x^kf](\xi,\eta)=\int_\R x^kf(x,\xi x+\eta)\d x.
\]
Observe that for $k=1$, using \eqref{id:1}, we have
\begin{multline*}
R[xf](\xi,\eta)=\int_{-\infty}^\eta \del_{\eta'}R[xf](\xi,\eta')\d\eta'=\int_\R H(\eta-\eta')\del_\xi R[f](\xi,\eta')\d\eta'\\
=(H\circledast_\eta\del_\xi R[f])(\xi,\eta),
\end{multline*}
where $\circledast_\eta$ denotes convolution in the $\eta$-variable, $H$ denotes the Heaviside function. We will generalize the above identity using the fact that convolution of the Heaviside function with itself $k$ times results in the function $H_{k+1}(\eta)=\eta_+^k/k!$ (i.e. $H_1=H, H_2=H*H$, etc.), so
\begin{multline*}
(H_k\circledast_\eta\del_\xi^kR[f])(\xi,\eta)=\int_{-\infty}^\eta\frac{(\eta-\eta')^{k-1}}{(k-1)!}\del_\xi^{k}R[f](\xi,\eta')\d\eta'\\
=\int_{-\infty}^\eta\frac{(\eta-\eta')^{k-1}}{(k-1)!}\del_{\eta'}^{k}R[x^{k}f](\xi,\eta')\d\eta'\\
=\int_{-\infty}^\eta\del_{\eta'}R[x^{k}f](\xi,\eta')\d\eta'=R[x^{k}f](\xi,\eta).
\end{multline*}
To summarize, the following identity holds:
\begin{equation}
H_k\circledast_\eta\del_\xi^{k}R[f]=R[x^{k}f], \label{id:2}
\end{equation}
for $k=1,2,\dots$ (if we set $H_0(\eta)=\delta(\eta)$, Dirac's delta distribution, \eqref{id:2} also makes sense for $k=0$).

Now suppose $\varphi$ is a test function of the type that is mentioned in Definition \ref{d:hor}, and that we fix an $\eta=\gamma$. We can then derive the following identity, by just doing a change of variables $\xi$ to $y=\xi x+\gamma$,
\[
\iint_{\R^2}x^kf(x,\xi x+\gamma)\d x\,\varphi_{\eps}(\xi)\d\xi=\iint_{\R^2}x^kf(x,y)\varphi_{\eps|x|}(\gamma-y)\d x\d y.
\]
By $\varphi_{\eps}(\xi)=\frac1\eps\varphi\left(\frac{\xi}{\eps}\right)$ etc.\ we denote the usual dilations. Observe now that the $y$-integral is what defines the mean $\mathcal{M}_{\eps,\gamma}[f](x)$, so we have actually shown:
\begin{equation}\label{eq:p_eg}
\int_{|\xi|\leq\eps}R[x^kf](\xi,\gamma)\varphi_{\eps}(\xi)\d\xi=\int_{|x|\leq1}x^k\mathcal{M}_{\eps,\gamma}[f](x)\d x.
\end{equation}
This illustrates how the means $\mathcal{M}_{\eps,\gamma}[f]$ fit into our framework. If we introduce the moment functionals
\[
m_k[f]=\int x^kf(x)\d x
\]
we are ready to prove the following lemma:

\begin{lemma}\label{l:mom0}
Suppose $\mathcal{M}_{\eps,\gamma}[f]$ is as in Definition \ref{d:hor}, $\supp f\subset\{(x,y);y\geq x^2\}$ and that $\gamma\geq\eps^2/4$. Then for $k=1,2,\dots$,
\[ 
|m_k(\mathcal{M}_{\eps,\gamma}[f])|\leq \frac{(2\gamma)^{k-1}}{(k-1)!}\|\del_\xi^k\varphi_{\eps}\|_\infty\|R[f]\|_{L^1(R_{\eps,\gamma})}
\]
where $R[f](\xi,\eta)=\int f(x,\xi x+\eta)\d x, \|R[f]\|_{L^1(R_{\eps,\gamma})}=\iint_{R_{\eps,\gamma}}|R[f](\xi,\eta)|\d\xi\d\eta$ and $R_{\eps,\gamma}=\{(\xi,\eta);|\xi|\leq\eps,|\eta|<\gamma\}$.
\end{lemma}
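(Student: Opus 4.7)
\bigskip

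\noindent\textbf{Proof plan for Lemma \ref{l:mom0}.}

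The plan is to combine the two identities already established, namely \eqref{eq:p_eg}
$$m_k(\mathcal{M}_{\eps,\gamma}[f])=\int_{|x|\leq 1}x^k\mathcal{M}_{\eps,\gamma}[f](x)\d x=\int_{|\xi|\leq\eps}R[x^kf](\xi,\gamma)\varphi_{\eps}(\xi)\d\xi,$$
and the convolution identity \eqref{id:2}. Substituting $R[x^kf]=H_k\circledast_\eta\del_\xi^kR[f]$ into the right-hand side gives a double integral in $(\xi,\eta')$ where the $\eta'$-variable ranges over $(-\infty,\gamma]$ against the kernel $(\gamma-\eta')^{k-1}/(k-1)!$, and the $\xi$-variable is paired against $\varphi_\eps(\xi)$ with $k$ derivatives landing on $R[f]$.

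Next, I would integrate by parts $k$ times in $\xi$ to move all the $\del_\xi$-derivatives off of $R[f]$ and onto $\varphi_\eps$. The boundary terms vanish because $\varphi\in C_0^\infty[-1,1]$, so $\varphi_\eps$ and all its derivatives vanish at $\xi=\pm\eps$. This produces
$$m_k(\mathcal{M}_{\eps,\gamma}[f])=(-1)^k\int_{|\xi|\leq\eps}\del_\xi^k\varphi_\eps(\xi)\int_{-\infty}^\gamma\frac{(\gamma-\eta')^{k-1}}{(k-1)!}R[f](\xi,\eta')\d\eta'\d\xi,$$
where the identity \eqref{id:2} is to be interpreted distributionally, but since we pair against the smooth compactly supported test function $\del_\xi^k\varphi_\eps(\xi)\cdot H_k(\gamma-\eta')\mathbf{1}_{(-\infty,\gamma]}(\eta')$ the final integral is classical.

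Now I would invoke the support condition. Since $\supp f\subset\{y\geq x^2\}$ one checks that $\supp R[f]\subset\{\eta\geq -\xi^2/4\}$, so for fixed $\xi$ with $|\xi|\leq\eps$ the inner integral in $\eta'$ is in fact supported in $[-\xi^2/4,\gamma]\subset[-\eps^2/4,\gamma]$. The hypothesis $\gamma\geq\eps^2/4$ then yields the crucial bound
$$|\gamma-\eta'|\leq \gamma+\tfrac{\eps^2}{4}\leq 2\gamma,$$
uniformly on this range, so $(\gamma-\eta')^{k-1}/(k-1)!\leq (2\gamma)^{k-1}/(k-1)!$. Moreover, the support of integration in $(\xi,\eta')$ is contained in $R_{\eps,\gamma}=\{|\xi|\leq\eps,|\eta|<\gamma\}$ (since $-\eps^2/4\geq-\gamma$). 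Pulling out the supremum $\|\del_\xi^k\varphi_\eps\|_\infty$ and the constant $(2\gamma)^{k-1}/(k-1)!$, what remains is $\iint_{R_{\eps,\gamma}}|R[f]|\d\xi\d\eta'=\|R[f]\|_{L^1(R_{\eps,\gamma})}$, giving exactly the stated estimate.

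The only mildly delicate point is justifying the iterated integration by parts, since $R[f]$ need only be a distribution under the stated hypotheses; but since everything is convolved against a smooth bump this reduces to a standard duality argument and is not a real obstacle. The substantive content of the lemma is the coordinated use of the support condition together with the convolution identity \eqref{id:2}, which trades $k$ derivatives on $R[f]$ for the polynomially growing factor $(2\gamma)^{k-1}/(k-1)!$ combined with $k$ derivatives landing on the fixed bump $\varphi_\eps$.
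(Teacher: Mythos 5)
Your proposal is correct and follows essentially the same route as the paper: start from the identity \eqref{eq:p_eg}, substitute \eqref{id:2}, integrate by parts $k$ times in $\xi$, and use the support condition together with $\gamma\geq\eps^2/4$ to restrict the $\eta$-integration to $R_{\eps,\gamma}$ and bound $(\gamma-\eta)^{k-1}$ by $(2\gamma)^{k-1}$. The paper's proof is just a terser version of your argument, with the support and boundary-term justifications left implicit.
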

\begin{proof}
\[
m_k(\mathcal{M}_{\eps,\gamma}[f])=\int_{-\eps}^\eps R[x^kf](\xi,\gamma)\varphi_{\eps}(\xi)\d\xi.
\]
Using \eqref{id:2}, doing $k$ integrations by parts and assuming that $\gamma\geq\eps^2/4$, we get,
\begin{multline*}
|m_k(\mathcal{M}_{\eps,\gamma}[f])|=\left|\int_{-\eps}^\eps\int_{-\gamma}^{\gamma}\frac{(\gamma-\eta)^{k-1}}{(k-1)!}R[f](\xi,\eta)\varphi_{\eps}^{(k)}(\xi)\d\eta\d\xi\right|\\
\leq\frac{(2\gamma)^{k-1}}{(k-1)!}\|\del_\xi^k\varphi_{\eps}\|_\infty\iint_{R_{\eps,\gamma}}|R[f](\xi,\eta)|\d\xi\d\eta.
\end{multline*}
\end{proof}
\begin{remark}
The condition $\gamma\geq\eps^2/4$ could be removed by just replacing $\gamma$ everywhere by $\max\{\gamma,\eps^2/4\}$ (c.f.\ Figure \ref{fig:supp1}). For brevity we choose to keep this assumption in what follows.
\end{remark}
Applying Lemma \ref{l:mom0}, with a test function $\varphi=\phi\in G_0^\sigma([-1,1])$ (so that $\|\del_\xi^k\phi\|_\infty\leq Ck!^\sigma, k=1,2,\dots$, c.f.\ Appendix, Section \ref{sec:Gevrey}) in the definition of $\mathcal{M}_{\eps,\gamma}=\mathcal{M}_{\varphi,\eps,\gamma}$ we get:
\[
|m_k(\mathcal{M}_{\eps,\gamma}[f])|\leq\frac{(2\gamma)^{k-1}C^{k+1}k!^s}{\eps^{k+1}}\|R[f]\|_{L^1(R_{\eps,\gamma})},\quad k=1,2,\dots.
\]
For the most general weight functions that we will consider in section \ref{s:gevwts} we are going to have to do a similar argument as the one above. 

When working with constant (or real analytic) $m$ we can however do better. We can choose a test function $\varphi$ from a special sequence detailed in the Appendix, Section \ref{s:horseq}. Denote such a test function by $\varphi_N$, where $N$ is an integer to be determined later, and its derivatives can be estimated by
\[
\|\del_\xi^k\varphi_{N,\eps}\|_\infty=\frac{1}{\eps^{k+1}}\|\varphi_N^{(k)}\|_\infty\leq\frac{C^{k+1}N^k}{\eps^{k+1}},\quad k\leq N.
\]
Using this we get the following special case of Lemma \ref{l:mom0}:
\begin{lemma}\label{p:mom1}
Suppose $\mathcal{M}_{\eps,\gamma}[f]$ is as in Definition \ref{d:hor}, then
\[
|m_k(\mathcal{M}_{\eps,\gamma}[f])|\leq\frac{(2\gamma)^{k-1}C^{k+1}N^k}{(k-1)!\eps^{k+1}}\|R[f]\|_{L^1(R_{\eps,\gamma})},\quad k=1,2,\dots,N,
\]
where $R[f](\xi,\eta)=\int f(x,\xi x+\eta)\d x$ and $R_{\eps,\gamma}=\{(\xi,\eta);|\xi|\leq\eps,|\eta|<\gamma\}$.
\end{lemma}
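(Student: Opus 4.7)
The plan is to obtain Lemma \ref{p:mom1} as a direct specialization of Lemma \ref{l:mom0}, with the generic test function $\varphi$ replaced by a carefully chosen member $\varphi_N$ of the sequence constructed in the Appendix (Section \ref{s:horseq}). Lemma \ref{l:mom0} has already done the heavy lifting by reducing the $k$-th moment of $\mathcal{M}_{\eps,\gamma}[f]$ to an integration-by-parts bound involving $\|\del_\xi^k\varphi_{\eps}\|_\infty$; only this supremum remains to be controlled by a quantity of the form $C^{k+1}N^k/\eps^{k+1}$.

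First I would record the elementary scaling identity $\varphi_{N,\eps}(\xi)=\eps^{-1}\varphi_N(\xi/\eps)$, which by the chain rule gives
\[
\|\del_\xi^k\varphi_{N,\eps}\|_\infty=\eps^{-(k+1)}\|\varphi_N^{(k)}\|_\infty.
\]
Then I would invoke the defining property of the sequence $\{\varphi_N\}$ from Section \ref{s:horseq}, namely $\|\varphi_N^{(k)}\|_\infty\leq C^{k+1}N^k$ for all $k\leq N$, together with the fact that each $\varphi_N$ is a legitimate test function in the sense of Definition \ref{d:hor} (even, supported in $[-1,1]$, with integral one). The combination yields
\[
\|\del_\xi^k\varphi_{N,\eps}\|_\infty\leq\frac{C^{k+1}N^k}{\eps^{k+1}},\qquad k=1,2,\dots,N.
\]

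To finish, I would apply Lemma \ref{l:mom0} with this choice of test function and substitute the preceding estimate directly into its right-hand side, arriving at
\[
|m_k(\mathcal{M}_{\eps,\gamma}[f])|\leq\frac{(2\gamma)^{k-1}}{(k-1)!}\cdot\frac{C^{k+1}N^k}{\eps^{k+1}}\|R[f]\|_{L^1(R_{\eps,\gamma})},
\]
valid in the stated range $k=1,\dots,N$, which is exactly the claim.

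There is essentially no substantive obstacle at this stage: all the analytic work — the identity \eqref{id:2}, the $k$ integrations by parts, the support condition $\gamma\geq\eps^2/4$ — has already been executed in the proof of Lemma \ref{l:mom0}, and the construction of the distinguished sequence $\{\varphi_N\}$ is deferred to the Appendix. The only points one must be a little careful about are that the constant $C$ is generic (it may absorb universal factors from the scaling and from the construction of $\varphi_N$) and that the restriction $k\leq N$ reflects the breakdown of the uniform derivative bound for $\varphi_N$ beyond order $N$; since the estimate is only claimed in this range, no issue arises.
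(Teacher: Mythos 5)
Your proposal is correct and is essentially identical to the paper's own derivation: the paper likewise obtains Lemma \ref{p:mom1} by taking the test function to be a member $\varphi_N$ of the H\"ormander sequence from Section \ref{s:horseq}, using the scaling identity $\|\del_\xi^k\varphi_{N,\eps}\|_\infty=\eps^{-(k+1)}\|\varphi_N^{(k)}\|_\infty\leq C^{k+1}N^k/\eps^{k+1}$ for $k\leq N$, and substituting into Lemma \ref{l:mom0}. Your remarks about the generic constant $C$ and the restriction $k\leq N$ are also consistent with the paper.
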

\begin{figure}[h!]
\centering
\includegraphics[scale=0.8]{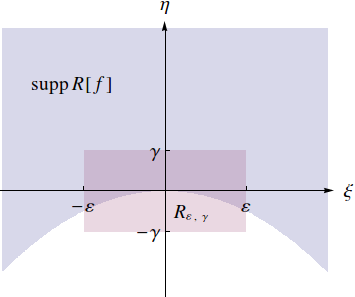}
\caption{The support of $R[f]$ (blue set, complementary to parabola) and the set $R_{\eps,\gamma}$ (pink rectangle) containing the support of $R[f](\xi,\eta)(\gamma-\eta)_+^k\varphi_{\eps}(\xi)$ under the assumption $\gamma\geq\eps^2/4$. If the last condition is not fulfilled $R_{\eps,\gamma}=\{(\xi,\eta);|\xi|\leq\eps,-\max(\gamma,\eps^2/4)\leq\eta\leq\gamma\}$.}
\label{fig:supp1}
\end{figure}
\begin{remark}\label{p:mom0}
Observe that for $k=0$ one has by similar arguments
\[
|m_0(\mathcal{M}_{\eps,\gamma}[f])|\leq\|\varphi_{N,\eps}\|_\infty\int_{-\eps}^\eps|R[f](\xi,\gamma)|\d\xi\leq\frac{C}{\eps}\|R[f](\cdot,\gamma)\|_{L^1[-\eps,\eps]}.
\]
This estimate is not of the same type as in Lemma \ref{p:mom1} due to the different norm on the data in the right hand side. A simple workaround is to define another norm $\|\cdot\|_{\eps,\gamma}$, increasing in $\eps$ and $\gamma$, such that
\begin{equation}\label{newnorm}
\|R[f]\|_{\eps,\gamma}\gtrsim\max\{\|R[f]\|_{L^1(R_{\eps,\gamma})},\|R[f](\cdot,\gamma)\|_{L^1[-\eps,\eps]}\}
\end{equation}
(recall that $f\lesssim g$ means that there is an absolute constant $c>0$ such that $f\leq cg$).
An example would be
\[
\|R[f]\|_{\eps,\gamma}=\sup_{|\eta|\leq\gamma}\|R[f](\cdot,\eta)\|_{L^1[-\eps,\eps]}.
\]
\end{remark}

Now we can easily combine Lemma \ref{p:mom1} and Remark \ref{p:mom0} into a proposition that bounds all $k$-moments for $k=0,1,\dots,N$.
\begin{proposition}\label{p:mom}
Suppose $\mathcal{M}_{\eps,\gamma}[f]$ is as in Definition \ref{d:hor}, $\supp f\subset\{(x,y);y\geq x^2\}$ and that $\eps^2/4\leq\gamma<1$, then for $k=0,1,2,\dots,N$
\[
|m_k(\mathcal{M}_{\eps,\gamma}[f])|\leq\left(\frac{C}{\eps}\right)^{k+1}\e^N\|R[f]\|_{\eps,\gamma}.
\]
\end{proposition}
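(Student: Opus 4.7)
My plan is to combine Lemma \ref{p:mom1} (for $k\geq 1$) with Remark \ref{p:mom0} (for $k=0$), treating these two cases separately. The common simplification is that the norm $\|R[f]\|_{\eps,\gamma}$ is by construction chosen so as to dominate both $\|R[f]\|_{L^1(R_{\eps,\gamma})}$ and $\|R[f](\cdot,\gamma)\|_{L^1[-\eps,\eps]}$ up to an absolute constant, cf.\ \eqref{newnorm}. So in both bounds one may harmlessly replace the right hand side norm by $\|R[f]\|_{\eps,\gamma}$ at the cost of an absolute constant that is absorbed into $C$.

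For $k=0$ the proof is immediate: Remark \ref{p:mom0} gives
\[
|m_0(\mathcal{M}_{\eps,\gamma}[f])|\leq \frac{C}{\eps}\|R[f](\cdot,\gamma)\|_{L^1[-\eps,\eps]}\lesssim \frac{C}{\eps}\|R[f]\|_{\eps,\gamma},
\]
and since $\e^N\geq 1$ this already has the required form with $k=0$.

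For $1\leq k\leq N$ I would start from Lemma \ref{p:mom1}. The hypothesis $\gamma<1$ yields $(2\gamma)^{k-1}\leq 2^{k-1}$, so it only remains to control the combinatorial factor $N^k/(k-1)!$. The key observation is
\[
\frac{N^k}{(k-1)!}=k\cdot\frac{N^k}{k!}\leq k\,\e^N,
\]
which is just the term-by-term bound from the Taylor series of the exponential. The remaining pre-factor $k\cdot 2^{k-1}\cdot C^{k+1}$ is polynomial in $k$ times exponential, and since $k\leq 2^k$ for $k\geq 1$ it is bounded by $2^{2k-1}C^{k+1}\leq (4C)^{k+1}$. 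Redefining the constant $C$ accordingly then yields $(C/\eps)^{k+1}\e^N\|R[f]\|_{\eps,\gamma}$, as required.

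The argument is essentially bookkeeping; the only conceptual point is that the dependence on $N$ enters solely through the combination $N^k/k!$, which the exponential series caps at $\e^N$, while any polynomial overhead in $k$ is exponentially absorbed into the base $(C/\eps)^{k+1}$. I therefore do not anticipate any real obstacle.
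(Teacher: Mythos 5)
Your proof is correct and follows essentially the same route as the paper: combine Lemma \ref{p:mom1} with Remark \ref{p:mom0}, use $\gamma<1$ to drop the $(2\gamma)^{k-1}$ factor, recognize $N^k/k!$ as a term of the series for $\e^N$, and absorb the polynomial overhead $k$ into the exponential base $(C/\eps)^{k+1}$. The only (cosmetic) difference is that you bound $k\leq 2^k$ where the paper uses $k\leq 2^{(k+1)/2}$.
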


\begin{proof}
Since $k\leq2^{(k+1)/2}$ for $k\geq0$ and by Lemma \ref{p:mom1},
\[
\frac{(2\gamma)^{k}C^{k+1}N^k}{(k-1)!\eps^{k+1}}\leq\left(\frac{2\sqrt2 C}{\eps}\right)^{k+1}\frac{N^k}{k!}.
\]
Using that $N^k/k!$ is simply a term in the series defining $\e^N$ and replacing $2\sqrt2 C$ with $C$ we complete the proof.
\end{proof}

\subsection{First stability estimate}
\label{s:rt_est}
Now we are ready to estimate the $L^2$-norm of $\mathcal{M}_{\eps,\gamma}[f]$.

\begin{theorem}\label{main2}
If $0<\eps<1, \eps^2/4\leq\gamma<1$, $\|f\|_{C^{0,\alpha}(\R^2)}\leq C_0$ and $\supp f\subset\{(x,y);y\geq x^2\}$, then
\begin{equation}
\|\mathcal{M}_{\eps,\gamma}[f]\|_2\leq4M\left(\frac{\log(C/\eps)}{\log(M/\|R[f]\|_{\eps,\gamma})}\right)^\alpha
\end{equation}
for small $\|R[f]\|_{\eps,\gamma}$, where $M$ depends on $C_0$.
\end{theorem}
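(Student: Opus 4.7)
The plan is to expand $u := \mathcal{M}_{\eps,\gamma}[f]$ in the $L^2$-orthonormal Legendre basis on $[-1,1]$ and optimize a split between low-index and high-index coefficients. Set up: by Theorem \ref{t:preg}, $u \in C^{0,\alpha}(\R)$ with $\|u\|_{C^{0,\alpha}(\R)} \leq \sqrt{2}\,C_0$, and since $\eps,\gamma<1$ we have $\supp u \subset [-x_{\eps,\gamma},x_{\eps,\gamma}]\subset[-1,1]$. Let $\{\tilde P_n\}_{n\geq 0}$ be the orthonormal Legendre polynomials on $[-1,1]$, write $u=\sum_n b_n\tilde P_n$, so that $\|u\|_2^2=\sum_n |b_n|^2$. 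The parameter $N\in\N$ will be chosen at the end.

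For the low-index part, note that $\tilde P_n$ is a polynomial of degree $n$ whose monomial coefficients are bounded by some $D^n$ (absolute $D>0$), coming from Rodrigues' formula together with the normalization factor $\sqrt{(2n+1)/2}$. Hence $b_n=\int u\,\tilde P_n\d x$ is a linear combination of the moments $m_0(u),\dots,m_n(u)$ with coefficients of size at most $D^n$. Feeding in Proposition \ref{p:mom}, which gives $|m_k(u)|\leq (C/\eps)^{k+1}\e^N\|R[f]\|_{\eps,\gamma}$ for all $0\leq k\leq n\leq N$, we obtain a uniform bound
\[
|b_n|\leq (A/\eps)^{N+1}\e^N\|R[f]\|_{\eps,\gamma},\qquad n=0,1,\dots,N,
\]
for some absolute $A>0$, and therefore
\[
\sum_{n=0}^N|b_n|^2\leq (N+1)(A\e/\eps)^{2N+2}\|R[f]\|_{\eps,\gamma}^2.
\]

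For the tail I would invoke Jackson's approximation theorem: since $u\in C^{0,\alpha}[-1,1]$, there exists a polynomial $q_N$ of degree $\leq N$ with $\|u-q_N\|_\infty\leq K\|u\|_{C^{0,\alpha}}/N^\alpha$. Because the $L^2$-orthogonal projection onto polynomials of degree $\leq N$ is optimal in $L^2$, and $\|\cdot\|_2\leq\sqrt{2}\|\cdot\|_\infty$ on $[-1,1]$,
\[
\sum_{n>N}|b_n|^2=\|u-\mathrm{proj}_Nu\|_2^2\leq\|u-q_N\|_2^2\leq \frac{M^2}{N^{2\alpha}},
\]
where $M$ can be chosen so that both the Jackson constant and the $\sqrt{2}\,C_0$ bound from Theorem \ref{t:preg} are absorbed (with a little room to yield the factor $4M$ in the final statement).

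Combining the two bounds gives
\[
\|u\|_2^2\leq (N+1)(B/\eps)^{2N+2}\|R[f]\|_{\eps,\gamma}^2+\frac{M^2}{N^{2\alpha}},
\]
with $B=A\e$. Choosing $N$ to be the largest integer with $(B/\eps)^{2N+2}\|R[f]\|_{\eps,\gamma}\leq 1$, i.e.\ $N\sim\log(M/\|R[f]\|_{\eps,\gamma})/(2\log(B/\eps))$, makes the first term negligible compared to the second for small $\|R[f]\|_{\eps,\gamma}$, and the second term becomes
\[
\frac{M^2}{N^{2\alpha}}\lesssim M^2\left(\frac{\log(C/\eps)}{\log(M/\|R[f]\|_{\eps,\gamma})}\right)^{2\alpha},
\]
giving the desired estimate after taking square roots. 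The main obstacle I foresee is the bookkeeping in the low-index step — the moment bound of Proposition \ref{p:mom} contains both a factor $(C/\eps)^{k+1}$ and a factor $\e^N$, and when combined with Legendre coefficients of size $D^n$ and summed, one has to verify that everything still collapses cleanly into an $(A/\eps)^{2N}$-type constant so that the optimization produces exactly $\log(C/\eps)$ in the numerator; careful choice of $N$ is then needed to absorb polynomial-in-$N$ prefactors into the constant $4M$.
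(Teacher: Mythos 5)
Your proposal is correct and follows essentially the same route as the paper: Fourier--Legendre expansion of $\mathcal{M}_{\eps,\gamma}[f]$, low-index coefficients controlled by the moment bounds of Proposition \ref{p:mom} via the coefficient-size bound (the paper's Lemma \ref{l:legmom}), the tail controlled by Jackson's theorem (Lemma \ref{l:tailest}), and an optimization over $N$ of order $\log(M/\|R[f]\|_{\eps,\gamma})/\log(C/\eps)$. The only cosmetic difference is that you choose $N$ so that the low-index term is negligible rather than balancing the two terms exactly, and the bookkeeping worries you raise ($\e^N$, $(N+1)^{1/2}$, the $(4\sqrt2)^n$ factors) are resolved exactly as you suspect, by absorbing them into an enlarged constant $C$ in $(C/\eps)^{N+1}$.
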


\begin{proof}
By approximating $\mathcal{M}_{\eps,\gamma}[f]$ with a Fourier-Legendre sum $S_N[\mathcal{M}_{\eps,\gamma}[f]]$ on the interval $[-1,1]$, and using the triangle inequality we first get
\begin{equation}
\|\mathcal{M}_{\eps,\gamma}[f]\|_2\leq\left(\sum_{n=0}^N|a_n|^2\right)^{1/2}+\|\mathcal{M}_{\eps,\gamma}[f]-S_N[\mathcal{M}_{\eps,\gamma}[f]]\|_2.\label{eq_l2}
\end{equation}
By Lemma \ref{l:tailest} and Theorem \ref{t:preg}, we have that
\[
\|\mathcal{M}_{\eps,\gamma}[f]-S_N[\mathcal{M}_{\eps,\gamma}[f]]\|_2\leq\frac{4A_0C_0}{N^\alpha}=:\frac{M}{N^\alpha}.
\]
Next, by Lemma \ref{l:legmom} and Proposition \ref{p:mom}, we have for $n\leq N$
\begin{multline*}
|a_n|\leq(4\sqrt2)^n\max_{0\leq k\leq n}\left(\frac{C}{\eps}\right)^{k+1}\e^N\|R[f]\|_{\eps,\gamma}
=(4\sqrt2)^n\e^N\|R[f]\|_{\eps,\gamma}\left(\frac{C}{\eps}\right)^{n+1}.
\end{multline*}
It follows, by absorbing lower order factors into exponentials and choosing $C$ larger, that
\[
|a_n|\leq\left(\frac{C}{\eps}\right)^{n+1}\e^N\|R[f]\|_{\eps,\gamma},\quad n\leq N.
\]
Substituting this into the finite sum in \eqref{eq_l2} we get (again with a larger constant $C$)
\[
\left(\sum_{n=0}^N|a_n|^2\right)^{1/2}\leq\left(\frac{C}{\eps}\right)^{N+1}\|R[f]\|_{\eps,\gamma}.
\]
Since we assume $H=\|R[f]\|_{\eps,\gamma}$ is small we find $N$ such that
\begin{equation}
\left(\frac{C}{\eps}\right)^{N+1}H\leq\frac{M}{N^\alpha}.\label{eq:HN}
\end{equation}
Taking logarithms, the above is equivalent with
\[
(N+1)\log(C/\eps)+\alpha\log(N)\leq\log(M/H).
\]
The above condition is implied by
\[
(N+1)\left(\log(C/\eps)+\alpha\frac{\log(N)}{N+1}\right)\leq(N+1)\log(C/\eps)\leq\log(M/H),
\]
where the first inequality follows from choosing a new and slightly larger constant $C$ in the right hand side. The last conditions can be implied by choosing $N$ as the largest possible integer that satisfies:
\[
N\leq\frac{\log(M/H)-\log(C/\eps)}{\log(C/\eps)}.
\]
By doing so we get from \eqref{eq:HN} and \eqref{eq_l2},
\[
\|\mathcal{M}_{\eps,\gamma}[f]\|_2\leq\frac{2M}{N^\alpha},
\]
and furthermore
\[
N\geq\frac{\log(M/H)-2\log(C/\eps)}{\log(C/\eps)}
\]
can be chosen, so 
\[
\|\mathcal{M}_{\eps,\gamma}[f]\|_2\leq2M\left(\frac{\log(C/\eps)}{\log(M/H)-2\log(C/\eps)}\right)^\alpha.
\]
Assuming that $H$ is so small that
\[
\log(C/\eps)\leq\frac{1-\beta}{2}\log(M/H)
\]
for some $1/2\leq\beta<1$ we finally get
\[
\|\mathcal{M}_{\eps,\gamma}[f]\|_2\leq4M\left(\frac{\log(C/\eps)}{\log(M/H)}\right)^\alpha.
\]
If $H=\|R[f]\|_{\eps,\gamma}=0$ it is immediate from Proposition \ref{p:mom} that $\|\mathcal{M}_{\eps,\gamma}[f]\|_2=0$.
\end{proof}

While Theorem \ref{main2} is interesting in itself we can easily get a stability estimate involving $f$ by combining the above with Proposition \ref{prop:pconv}.
\begin{theorem}\label{f_stability}
Suppose that $\|f\|_{C^{0,\alpha}(\R^2)}\leq C_0, \supp f\subset\{(x,y);y\geq x^2\}$ and that $\gamma>0$ is small enough. Given any $\eps>0$ there exist $M>0$ depending on $C_0$ such that,
\[
\|f(\cdot,\gamma)\|_{2}\lesssim M\frac{\log^\alpha\log(\|R[f]\|_{\eps,\gamma}^{-1})}{\log^\alpha(\|R[f]\|_{\eps,\gamma}^{-1})},
\]
if $\|R[f]\|_{\eps,\gamma}$ is sufficiently small.
\end{theorem}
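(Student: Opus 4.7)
The plan is to combine the $L^2$ estimate on the means from Theorem \ref{main2} with the pointwise convergence rate provided by Proposition \ref{prop:pconv}. First I would introduce an auxiliary parameter $\eps'\in(0,\eps]$, to be chosen as a function of the data, and apply the triangle inequality
\[
\|f(\cdot,\gamma)\|_2 \leq \|f(\cdot,\gamma)-\mathcal{M}_{\eps',\gamma}[f]\|_2 + \|\mathcal{M}_{\eps',\gamma}[f]\|_2.
\]
For the first term, Proposition \ref{prop:pconv} yields $|f(x,\gamma)-\mathcal{M}_{\eps',\gamma}[f](x)|\leq C_0(\eps'|x|)^\alpha$; since both $f(\cdot,\gamma)$ and $\mathcal{M}_{\eps',\gamma}[f]$ are supported in an $x$-interval contained in $[-1,1]$ (for $\gamma$ and $\eps'$ small), integration in $x$ gives $\|f(\cdot,\gamma)-\mathcal{M}_{\eps',\gamma}[f]\|_2\lesssim C_0(\eps')^\alpha$.

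For the second term, Theorem \ref{main2} furnishes
\[
\|\mathcal{M}_{\eps',\gamma}[f]\|_2 \leq 4M\left(\frac{\log(C/\eps')}{\log(M/\|R[f]\|_{\eps',\gamma})}\right)^\alpha.
\]
Since $\|R[f]\|_{\eps,\gamma}$ is monotone increasing in $\eps$ (as noted in Remark \ref{p:mom0}), we have $\|R[f]\|_{\eps',\gamma}\leq H:=\|R[f]\|_{\eps,\gamma}$ whenever $\eps'\leq\eps$; replacing $\|R[f]\|_{\eps',\gamma}$ by $H$ in the denominator only enlarges the upper bound, so with $L:=\log(M/H)$ we obtain
\[
\|f(\cdot,\gamma)\|_2 \lesssim (\eps')^\alpha + \left(\frac{\log(C/\eps')}{L}\right)^\alpha,
\]
with implicit constant depending on $C_0$.

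The last step is a one-parameter optimization in $\eps'$. Balancing the two contributions suggests $\eps'\sim(\log L)/L$, at which both terms are of order $(\log L/L)^\alpha$, since then $\log(C/\eps')\sim\log L$. This yields
\[
\|f(\cdot,\gamma)\|_2 \lesssim \left(\frac{\log L}{L}\right)^\alpha = \left(\frac{\log\log(M/H)}{\log(M/H)}\right)^\alpha,
\]
which, since $H$ is small, is equivalent to the claimed rate upon absorbing $M$ into the implicit constant. I do not anticipate any serious obstacle here: the substantive work has been done in Theorem \ref{main2} and Proposition \ref{prop:pconv}, and what remains is routine bookkeeping. The main thing to check is that the chosen $\eps'\sim(\log L)/L$ satisfies the admissibility conditions of Theorem \ref{main2} (namely $\eps'<1$, $\gamma\geq(\eps')^2/4$, and the smallness assumption $\log(C/\eps')\leq\frac{1-\beta}{2}L$), all of which hold automatically in the asymptotic regime $H\to0$.
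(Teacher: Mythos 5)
Your proposal is correct and follows essentially the same route as the paper: split via the triangle inequality, bound the approximation error by Proposition \ref{prop:pconv}, bound the mean by Theorem \ref{main2}, use monotonicity of $\|\cdot\|_{\eps,\gamma}$ in $\eps$, and optimize the auxiliary parameter. The only (immaterial) difference is that the paper takes $\eps'\asymp 1/\log(M/H)$ rather than your balanced choice $\eps'\asymp\log(L)/L$; in both cases the second term dominates and yields the same $(\log\log/\log)^\alpha$ rate.
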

\begin{proof}
Using Proposition \ref{prop:pconv}, and Theorem \ref{main2} we may estimate the terms in the right hand side of the inequality
\begin{equation}
\|f(\cdot,\gamma)\|_2\leq\|f(\cdot,\gamma)-\mathcal{M}_{\eps,\gamma}[f]\|_2+\|\mathcal{M}_{\eps,\gamma}[f]\|_2\label{startineq}
\end{equation}
by 
\begin{equation}
\|\mathcal{M}_{\eps,\gamma}[f]\|_2\leq4M\left(\frac{\log(C/\eps)}{\log(M/\|R[f]\|_{\eps,\gamma})}\right)^\alpha\label{helpineq1}
\end{equation}
and
\begin{equation}
\|f(\cdot,\gamma)-\mathcal{M}_{\eps,\gamma}[f]\|_2\leq C_0\eps^\alpha.\label{helpineq2}
\end{equation}
Suppose now that we take $0<\eps<\eps_0<1$, $H_\eps=\|R[f]\|_{\eps,\gamma}\leq\|R[f]\|_{\eps_0,\gamma}=H$ and $M\geq1$, then it follows from \eqref{helpineq1}
\begin{equation}
\|\mathcal{M}_{\eps,\gamma}[f]\|_2\leq4M\left(\frac{\log(C/\eps)}{\log(M/H_\eps)}\right)^\alpha\leq4M\left(\frac{\log{C}+\log(1/\eps)}{\log(M/H)}\right)^\alpha.\label{helpineq3}
\end{equation}
Substituting \eqref{helpineq2} and \eqref{helpineq3} back into \ref{startineq}, we have
\begin{equation}
\|f(\cdot,\gamma)\|_2\leq4M\left(\frac{\log{C}+\log(1/\eps)}{\log(M/H)}\right)^\alpha+C_0\eps^\alpha,\quad \eps<\eps_0.
\end{equation}
Under the assumption that $H=\|R[f]\|_{\eps_0,\gamma}$ is sufficiently small we may choose $\eps$ such that
\[
\frac{1}{\log(M/H)}\leq\eps\leq\frac{2}{\log(M/H)}<\eps_0.
\]
Then
\[
\|f(\cdot,\gamma)\|_2\leq4M\left(\frac{\log{C}+\log\log(M/H)}{\log(M/H)}\right)^\alpha+C_0\left(\frac{2}{\log(M/H)}\right)^\alpha
\]
So clearly
\[
\|f(\cdot,\gamma)\|_2\lesssim M\left(\frac{\log\log(\|R[f]\|_{\eps_0,\gamma}^{-1})}{\log(\|R[f]\|_{\eps_0,\gamma}^{-1})}\right)^\alpha.
\]
In the case $\|R[f]\|_{\eps_0,\gamma}=0$ it immediately follows for all $\eps<\eps_0$ that $\|f(\cdot,\gamma)\|_2\leq C_0\eps^\alpha$. So we conclude in this case that $\|f(\cdot,\gamma)\|_2=0$.
\end{proof}

\subsection{Estimates involving $\sup|{\mathcal{M}_{\eps,\gamma}[f]}(x)|$ and $\sup|f(x,\gamma)|$}
In the step from Theorem \ref{main2} to Theorem \ref{f_stability}, we were able to get estimates of $f$ instead of just estimates of the means $\mathcal{M}_{\eps,\gamma}[f]$. But we apparently lose a little bit due to the added $\log\log$-factor in the nominator. However, in the case $\alpha>1/2$ we can even get supremum norm estimates without the added $\log\log$-factor, c.f.\ Corollary \ref{f_stability_var}. Simply observe that by Lemma \ref{l:szbd} we can find a constant $B$ such that the $L^2$-normalized Legendre polynomials over $[-1,1]$ satisfy
\begin{equation}
\sup_{|x|\leq1/2}|\tilde{P}_n(x)|\leq B.\label{LP_ubd}
\end{equation}
For example $B=2^{1/4}\sqrt{3/\pi}$ will do. It follows that
\[
\sup_{|x|\leq1/2}|\mathcal{M}_{\eps,\gamma}[f](x)|\leq\sum_{n=0}^\infty\sup_{|x|\leq1/2}|a_n\tilde{P}_n(x)|\leq B\sum_{n=0}^\infty(1+n)^{-\beta}|a_n|(1+n)^\beta.
\]
By Cauchy-Schwarz inequality, in the case $\beta>1/2$, we get
\begin{multline*}
\sup_{|x|\leq1/2}|\mathcal{M}_{\eps,\gamma}[f](x)|^2\leq B^2\sum_{n=0}^\infty(1+n)^{-2\beta}\sum_{n=0}^\infty(1+n)^{2\beta}|a_n|^2\\=B^2\zeta(2\beta)\sum_{n=0}^\infty(1+n)^{2\beta}|a_n|^2,
\end{multline*}
where $\zeta(s)$ is the Riemann zeta function. Now we split the last sum on the right hand side and observe that we may estimate
\[
\left(\sum_{n=0}^{N}(1+n)^{2\beta}|a_n|^2\right)^{1/2}\leq\left(\frac{C}{\eps}\right)^{N+1}\|R[f]\|_{\eps,\gamma}
\]
just as in the proof of Theorem \ref{main2} since we only have an extra lower order factor. For the tail we can do a summation by parts,
\[
\sum_{n=N+1}^{\infty}(1+n)^{2\beta}|a_n|^2=A_{N+1}(1+(N+1))^{2\beta}+\sum_{n=N+2}^{\infty}A_n((1+n)^{2\beta}-n^{2\beta}),
\]
where we know from Lemma \ref{l:tailest} that 
\[
A_{N+1}=\sum_{k=N+1}^\infty|a_k|^2\leq\frac{M^2}{N^{2\alpha}}.
\]
Hence the first term can be estimated by
\[
A_{N+1}(N+2)^{2\beta}\leq\frac{M^2}{N^{2(\alpha-\beta)}}\left(1+\frac{2}{N}\right)^{2\beta}\lesssim\frac{M^2}{N^{2(\alpha-\beta)}}.
\]
The second series can be estimated by
\begin{multline*}
\sum_{n=N+2}^{\infty}A_n((1+n)^{2\beta}-n^{2\beta})\leq\sum_{n=N+2}^\infty\frac{M^2}{(n-1)^{2\alpha}n^{-2\beta}}\left(\left(1+\frac{1}{n}\right)^{2\beta}-1\right)\\
\leq\sum_{n=N+2}^\infty\frac{3M^2}{(n-1)^{2\alpha}n^{-2\beta+1}}\lesssim\sum_{n=N+1}^\infty\frac{M^2}{n^{2(\alpha-\beta)+1}}\lesssim\frac{M^2}{N^{2(\alpha-\beta)}}.
\end{multline*}
So we get,
\[
\left(\sum_{n>N}(1+n)^{2\beta}|a_n|^2\right)^{1/2}\lesssim\frac{M}{N^{\alpha-\beta}},
\]
for arbitrary $\alpha>\beta$. Again, by the same arguments as in the proof of Theorem \ref{main2} one arrives at
\begin{theorem}\label{main2var}
Suppose that $0<\eps<1$, $\gamma>0$ is small enough, $\|f\|_{C^{0,\alpha}(\R^2)}\leq C_0$, where the exponent $\alpha>1/2$, $0<\rho<\alpha-1/2$, and $\supp f\subset\{(x,y);y\geq x^2\}$. Then
\begin{equation}
\sup_{|x|\leq1/2}|\mathcal{M}_{\eps,\gamma}[f](x)|\lesssim M\left(\frac{\log(C/\eps)}{\log(M/\|R[f]\|_{\eps,\gamma})}\right)^\rho,
\end{equation}
for small $\|R[f]\|_{\eps,\gamma}$ and $M$ depending on $C_0$.
\end{theorem}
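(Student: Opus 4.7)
The plan is to mimic the proof of Theorem \ref{main2} but work with a weighted $\ell^2$-norm on the Fourier--Legendre coefficients that dominates the supremum norm. Writing $\mathcal{M}_{\eps,\gamma}[f](x)=\sum_n a_n\tilde P_n(x)$ and using the uniform bound \eqref{LP_ubd} on $|\tilde P_n|$ on $[-1/2,1/2]$, Cauchy--Schwarz gives
\[
\sup_{|x|\leq 1/2}|\mathcal{M}_{\eps,\gamma}[f](x)|^2\leq B^2\zeta(2\beta)\sum_{n=0}^\infty (1+n)^{2\beta}|a_n|^2
\]
for any $\beta>1/2$. The strategy is then to pick $\beta=\alpha-\rho$, which lies in $(1/2,\alpha)$ by the hypothesis $0<\rho<\alpha-1/2$, and to split the weighted sum at a cutoff $N$ to be optimized later.

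For the low-frequency part $n\leq N$, I would use exactly the moment-based estimate from the proof of Theorem \ref{main2}: Lemma \ref{l:legmom} combined with Proposition \ref{p:mom} already gives $|a_n|\leq (C/\eps)^{n+1}\e^N\|R[f]\|_{\eps,\gamma}$, and the extra polynomial factor $(1+n)^{2\beta}$ in the weighted sum can be absorbed into a slightly enlarged constant $C$. Hence
\[
\left(\sum_{n=0}^N (1+n)^{2\beta}|a_n|^2\right)^{1/2}\leq \left(\frac{C}{\eps}\right)^{N+1}\|R[f]\|_{\eps,\gamma}.
\]
For the tail $n>N$, I would exploit the $C^{0,\alpha}$-regularity of $\mathcal{M}_{\eps,\gamma}[f]$ guaranteed by Theorem \ref{t:preg}: Lemma \ref{l:tailest} gives $A_{N+1}=\sum_{k\geq N+1}|a_k|^2\leq M^2/N^{2\alpha}$, and a standard summation by parts converts this into the weighted tail bound
\[
\sum_{n>N}(1+n)^{2\beta}|a_n|^2\lesssim \frac{M^2}{N^{2(\alpha-\beta)}}=\frac{M^2}{N^{2\rho}},
\]
since the telescoped increments $(1+n)^{2\beta}-n^{2\beta}$ are $O(n^{2\beta-1})$ and the resulting series $\sum n^{-2\rho-1}$ converges.

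Combining both pieces yields
\[
\sup_{|x|\leq 1/2}|\mathcal{M}_{\eps,\gamma}[f](x)|\lesssim \left(\frac{C}{\eps}\right)^{N+1}\|R[f]\|_{\eps,\gamma}+\frac{M}{N^{\rho}},
\]
which has exactly the same structure as the key inequality in the proof of Theorem \ref{main2} with $\alpha$ replaced by $\rho$. I would then apply the very same optimization: balance the two terms by choosing $N$ essentially equal to the largest integer below $(\log(M/H)-\log(C/\eps))/\log(C/\eps)$, where $H=\|R[f]\|_{\eps,\gamma}$, under the smallness assumption $\log(C/\eps)\leq\tfrac{1-\beta_0}{2}\log(M/H)$ to absorb constants. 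The main technical obstacle, already handled in the excerpt above the statement, is the summation-by-parts step for the weighted tail; the rest is a transcription of the Theorem \ref{main2} argument with $\alpha\mapsto\rho$. The case $H=0$ is again immediate from Proposition \ref{p:mom}.
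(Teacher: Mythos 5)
Your proposal is correct and follows essentially the same route as the paper: the weighted $\ell^2$-bound via Lemma \ref{l:szbd} and Cauchy--Schwarz, absorption of the polynomial weight into the constant for $n\leq N$, summation by parts with Lemma \ref{l:tailest} for the tail, and the same $N$-optimization as in Theorem \ref{main2} with $\alpha$ replaced by $\rho=\alpha-\beta$. The explicit choice $\beta=\alpha-\rho\in(1/2,\alpha)$ is exactly what the paper's condition $0<\rho<\alpha-1/2$ encodes.
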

In particular we get from Theorem \ref{main2var} the estimate
\begin{equation}
|f(0,\gamma)|\leq\frac{C}{\log^\rho(1/\|R[f]\|_{\eps,\gamma})}\label{eq:cor_var}
\end{equation}
for some $C>0$ (depending on $C_0$, for some fixed $\eps>0$). To get supremum norm estimates for other points on a line $y=\delta$, for some $0<\delta<\gamma$ we can apply Theorem \ref{main2var} on $f(x,y)=u(x+b,y-\delta)$ where $\supp u$ is contained in a more narrow parabola (e.g.\ $\supp u\subset\{(x,y);y\geq2x^2\}$). (Or we could of course have considered $\supp f$ to be in a wider parabola from the start and proved Theorem \ref{main2var} with obvious modifications.) Assume that also $u=u(x,y)$ satisfies the a priori estimate $\|u\|_{C^{0,\alpha}}\leq C_0$ for the same $\alpha>1/2$. Now, since for any real number $b$, $2(x+b)^2+\delta\geq x^2$ holds if $\delta\geq2b^2$, assuming that $|b|\leq\sqrt{\delta/2}$ it can be seen to follow from \eqref{eq:cor_var}, that
\begin{equation}
|u(b,\delta)|\leq\frac{C}{\log^\rho(1/\|R[u]\|_{\eps,\delta})},\quad|b|\leq\sqrt{\delta/2}.\label{eq:cor_var2}
\end{equation}
Hence we have shown
\begin{corollary}\label{f_stability_var}
If $u=u(x,y)$ is a function such that $\|u\|_{C^{0,\alpha}(\R^2)}\leq C_0$, where the exponent $\alpha>1/2$, $0<\rho<\alpha-1/2$ and $\supp u\subset\{(x,y);y\geq cx^2\}$ for some $c>1$, then
\[
\sup_{x}|u(x,\delta)|\leq\frac{C}{\log^\rho(1/\|R[u]\|_{\eps,\delta})}
\]
for some $\eps>0, \delta>0$, $\|R[u]\|_{\eps,\delta}$ small and $C$ depending on $C_0$.
\end{corollary}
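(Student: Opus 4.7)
The plan is to deduce the supremum bound from the pointwise-at-the-origin bound of Theorem~\ref{main2var} by translating each candidate point $(b,\delta)$ to the ``origin'' of a suitably shifted copy of $u$. As a preliminary observation, the support hypothesis $\supp u\subset\{y\geq cx^2\}$ forces $u(b,\delta)=0$ whenever $|b|>\sqrt{\delta/c}$, so only the range $|b|\leq\sqrt{\delta/c}$ requires any work.

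For each such $b$ I would introduce the translate $f_b(x,y):=u(x+b,y-\delta')$ with vertical shift $\delta':=cb^2/(c-1)$, which is precisely the smallest $\delta'\geq 0$ making the quadratic $(c-1)x^2+2cbx+(cb^2+\delta')$ non-negative on all of $\R$. This guarantees $\supp f_b\subset\{y\geq x^2\}$ as required by Theorem~\ref{main2var}; moreover $\|f_b\|_{C^{0,\alpha}}=\|u\|_{C^{0,\alpha}}\leq C_0$ by translation-invariance of the H\"older semi-norm, and $\delta'\leq\delta/(c-1)$ uniformly in admissible $b$. Setting $\gamma:=\delta+\delta'$ (so that $\gamma\leq c\delta/(c-1)$ is still small when $\delta$ is) and evaluating the conclusion of Theorem~\ref{main2var} at $x=0$ yields
\[
|u(b,\delta)|=|f_b(0,\gamma)|\leq\sup_{|x|\leq 1/2}|f_b(x,\gamma)|\lesssim M\left(\frac{\log(C/\eps)}{\log(M/\|R[f_b]\|_{\eps,\gamma})}\right)^\rho.
\]

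It remains to control $\|R[f_b]\|_{\eps,\gamma}$ by a norm of $R[u]$ independent of $b$. The change of variables $x'=x+b$ in the line integral defining $R$ gives $R[f_b](\xi,\eta)=R[u](\xi,\eta-\delta'-\xi b)$, so that for $|\xi|\leq\eps$ and $|\eta|\leq\gamma$ the evaluation point $(\xi,\eta-\delta'-\xi b)$ lies in the rectangle $\{|\xi|\leq\eps,\ |\eta'|\leq\gamma+\delta'+\eps|b|\}$. Exploiting the flexibility in the data norm granted by Remark~\ref{p:mom0}---for instance taking $\|\cdot\|_{\eps,\gamma}$ to be the $L^\infty$ norm on $R_{\eps,\gamma}$, which clearly dominates both $L^1$ norms on the right-hand side of \eqref{newnorm}---one obtains $\|R[f_b]\|_{\eps,\gamma}\leq\|R[u]\|_{\eps,K\delta}$ for a constant $K=K(c,\eps)$ uniform in $b$. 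Taking the supremum over admissible $b$ and relabelling $K\delta$ as $\delta$, which is legitimate since the statement only asserts existence of suitable $\eps$ and $\delta$, gives the claim.

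The main, and essentially only, obstacle is this last uniform-in-$b$ bookkeeping for the Radon data norm; beyond that, the argument is a mechanical application of Theorem~\ref{main2var} combined with a translation.
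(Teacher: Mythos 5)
Your proposal is correct and follows essentially the same route as the paper: translate the point $(b,\delta)$ to the $y$-axis, use the slack $c>1$ in the support hypothesis to keep the translate inside $\{(x,y);y\geq x^2\}$, and apply the origin estimate \eqref{eq:cor_var} coming from Theorem \ref{main2var}; you are in fact more explicit than the paper about the admissible vertical shift $\delta'=cb^2/(c-1)$ for general $c>1$ and about transferring the data norm from $R[f_b]$ to $R[u]$. One small slip: the chain should read $|f_b(0,\gamma)|=|\mathcal{M}_{\eps,\gamma}[f_b](0)|\leq\sup_{|x|\leq1/2}|\mathcal{M}_{\eps,\gamma}[f_b](x)|$ (using Definition \ref{d:hor}), since Theorem \ref{main2var} bounds the supremum of the mean rather than of $f_b(\cdot,\gamma)$ itself.
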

\begin{remark}
Observe that in Corollary \ref{f_stability_var} we actually only require that a Hölder condition is fulfilled along all lines with slope smaller than $\eps>0$.
\end{remark}

\section{Estimate for the weighted Radon transform}
Now we move on towards a similar stability estimate for the weighted Radon transform. Assume that $m\in C^\infty(\R^3)$, and that $f\in C^{0,\alpha}(\R^2)$ with $\supp f\subset\{(x,y);y\geq x^2\}$. Then we define the weighted Radon transform (with weight $m$) of $f$ by
\begin{equation}
R_m[f](\xi,\eta)=\int_\R f(x,\xi x+\eta)m(x,\xi,\eta)\d x,\label{def:Rm}
\end{equation}
and its adjoint by
\begin{equation}
R_m^*[g](x,y)=\int_\R g(\xi,y-\xi x)m(x,\xi,y-\xi x)\d\xi.\label{def:Rm*}
\end{equation}
Observe that
\begin{align}
\del_\xi R_m[f](\xi,\eta)=&\int_\R x\,\del_yf(x,\xi x+\eta)m(x,\xi,\eta)+f(x,\xi x+\eta)\del_\xi m(x,\xi,\eta)\d x\label{eq:d1Rm}\\
\del_\eta R_m[xf](\xi,\eta)=&\int_\R x\,\del_yf(x,\xi x+\eta)m(x,\xi,\eta)+xf(x,\xi x+\eta)\del_\eta m(x,\xi,\eta)\d x\label{eq:d2Rm}.
\end{align}
So obviously we do not have an identity as \eqref{id:1}. However, subtracting \eqref{eq:d2Rm} from \eqref{eq:d1Rm} we get
\[
\del_\xi R_m[f](\xi,\eta)-\del_\eta R_m[xf](\xi,\eta)=\int_\R f(x,\xi x+\eta)[\del_\xi m(x,\xi,\eta)-x\del_\eta m(x,\xi,\eta)]\d x.
\]
Assuming that $m$ solves the differential equation \eqref{eq:boman_cond}, we introduce the differential operators:
\[
D_a:=\del_\eta+a(\xi,\eta),\quad D_b:=\del_\xi-b(\xi,\eta),
\]
where $a$ and $b$ are strictly positive smooth functions. To see that this makes sense, observe that 
\begin{multline*}
D_b R_m[f](\xi,\eta)-D_a R_m[xf](\xi,\eta)=\\
\int_\R f(x,\xi x+\eta)[\del_\xi m(x,\xi,\eta)-x\del_\eta m(x,\xi,\eta)-(b(\xi,\eta)+xa(\xi,\eta))m(x,\xi,\eta)]\d x.
\end{multline*}
This is zero if the $m$ satisfies the partial differential equation \eqref{eq:boman_cond}:
\begin{equation*}
\del_\xi m(x,\xi,\eta)-x\del_\eta m(x,\xi,\eta)=(xa(\xi,\eta)+b(\xi,\eta))m(x,\xi,\eta).
\end{equation*}
As mentioned in the introduction, this condition on $m$ first appeared in \cite{Gindikin} and is discussed also in \cite{Bo1}. Assuming that it holds, we have derived the relation
\[
D_b R_m[f](\xi,\eta)=D_a R_m[xf](\xi,\eta).
\]
However, $D_a$ and $D_b$ do not commute in general so the analysis will be quite different in the weighted case.

Using standard methods from the theory of differential equations we may derive expressions for the inverse operators of $D_a$ and $D_b$:
\[
D_a^{-1}=\e^{-A}(H\circledast_\eta\e^A\cdot),\quad D_b^{-1}=\e^{B}(H\circledast_\xi\e^{-B}\cdot),
\]
where $H$ denotes the Heaviside function and 
\[
A=A(\xi,\eta)=\int_{-\infty}^\eta a(\xi,\eta')\d\eta',\quad B(\xi,\eta)=\int_{-\infty}^\xi b(\xi',\eta)\d\xi'.
\]
For our purposes it is enough to conclude that for $h\in C^1(\R^2),\ \supp h\subset\{(\xi,\eta);\xi\geq\xi_0\in\R,\eta\geq\eta_0\in\R\}$, it holds that $D_a^{-1}D_ah=h$ and $D_b^{-1}D_bh=h$. One can easily verify the following result:
\begin{proposition}
\label{prop:iter_inv}
\[
(D_a^{-1})^k=\e^{-A}(H_k\circledast_\eta\e^A\cdot),\quad (D_b^{-1})^k=\e^{B}(H_k\circledast_\xi\e^{-B}\cdot),
\]
where $H_{k+1}(x)=\frac{x_+^k}{k!}$.
\end{proposition}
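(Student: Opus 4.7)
The plan is a straightforward induction on $k$, using the already-known case $k=1$ as the base and the semigroup property $H\circledast_\eta H_k = H_{k+1}$ of iterated Heaviside convolution as the inductive glue. I will write out the argument for $D_a^{-1}$; the case of $D_b^{-1}$ is identical after swapping the roles of $\xi,\eta$ and $A,-B$.

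For the base case $k=1$ the identity is exactly the formula for $D_a^{-1}$ stated just before the proposition, so nothing is to be done. Assume the formula holds at level $k$, i.e.\ for every admissible $h$,
\[
(D_a^{-1})^k h = \e^{-A}\bigl(H_k \circledast_\eta \e^{A} h\bigr).
\]
Apply $D_a^{-1}$ once more. Since $D_a^{-1}u = \e^{-A}(H\circledast_\eta \e^A u)$, with $u=(D_a^{-1})^k h$ the factor $\e^A$ cancels the leading $\e^{-A}$, yielding
\[
(D_a^{-1})^{k+1} h = \e^{-A}\bigl(H \circledast_\eta (H_k \circledast_\eta \e^A h)\bigr).
\]
By associativity of convolution in the $\eta$-variable this equals $\e^{-A}\bigl((H\circledast_\eta H_k)\circledast_\eta \e^A h\bigr)$, and the proof reduces to checking the one-dimensional identity $H\circledast H_k = H_{k+1}$.

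That identity is elementary: with $H_k(\eta) = \eta_+^{k-1}/(k-1)!$ one computes
\[
(H\circledast H_k)(\eta) = \int_{-\infty}^{\eta} \frac{(\eta')_+^{k-1}}{(k-1)!}\,\d\eta' = \frac{\eta_+^{k}}{k!} = H_{k+1}(\eta),
\]
which closes the induction. The same argument applies verbatim in the $\xi$-direction for $D_b^{-1}$, where one uses $D_b^{-1}u = \e^{B}(H\circledast_\xi \e^{-B} u)$ and the $\e^{-B}\cdot\e^{B}$ cancellation is exactly analogous.

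There is no real obstacle here: one only needs to keep track of supports so that the convolution is well-defined (the support assumption $\xi\geq\xi_0$, $\eta\geq\eta_0$ mentioned just before the proposition is precisely what makes each $H_k\circledast$-integral a proper integral over a finite interval), and one needs to verify that the exponential factors telescope correctly between successive applications of $D_a^{-1}$, which they do because $\e^{A}$ is a multiplication operator independent of the integration variable inside the convolution.
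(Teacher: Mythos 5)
Your induction is correct and complete: the base case is the displayed formula for $D_a^{-1}$ preceding the proposition, the exponential factors telescope as you say, and the identity $H\circledast H_k=H_{k+1}$ closes the step. The paper leaves this as ``one can easily verify,'' and your argument is precisely the intended verification.
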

\begin{remark}\label{rmk:comm}
Denote the usual commutator bracket by $[D_a,D_b]=D_aD_b-D_bD_a$, then for any $g\in C^2(\R^2)$, 
$D_bD_ag=D_aD_bg-[D_a,D_b]g$. It is easy to see that $[D_a,D_b]$ is an order zero differential operator since
\begin{multline*}
[D_a,D_b]g=(\del_\eta+a)(\del_\xi g-bg)-(\del_\xi-b)(\del_\eta g+ag)\\
=\del_\eta\del_\xi g-\del_\eta(bg)+a\del_\xi g-abg-(\del_\xi\del_\eta g+\del_\xi(ag)-b\del_\eta g-bag)\\
=-g\del_\eta b-b\del_\eta g+a\del_\xi g-g\del_\xi a-a\del_\xi g+b\del_\eta g=-g(\del_\eta b+\del_\xi a).
\end{multline*}
Furthermore, $[D_a,D_b]=0$ when $\del_\eta b=-\del_\xi a$. This is a Cauchy-Riemann type of equation which is fulfilled if for example $b$ and $a$ are the real- and imaginary parts of a holomorphic function in $\zeta=\xi+\i\eta$, or in other words $a$ is the harmonic conjugate of $b$. If $[D_a,D_b]=0$ on some open set $U\supset\supp R_m[f]$ and $m$ satisfies \eqref{eq:boman_cond} then we do get an identity similar to \eqref{id:1},
\begin{equation*}
D_a^kR_m[x^kf](\xi,\eta)=D_b^kR_m[f](\xi,\eta).\label{eq:comm}
\end{equation*}
As this is a very special case we will not consider it in more detail but simply observe that several arguments that we have to go through for more general $m$ could be simplified.
\end{remark}

\subsection{Moment estimates II}
\label{sec:Wmeans}
In the case when we assume only that $a=a(\xi,\eta)$ and $b=b(\xi,\eta)$ are smooth over some set $U\supset\supp R_m[f]$, $[D_a,D_b]$ is in general non-zero. We will first add the assumption that the functions $a,b\in C^{\omega}(\R^2)$, i.e.\ that $a$ and $b$ are real analytic. Suppose also that $\supp a\subset\{(\xi,\eta);\eta>-\gamma\}$ and recall that
\[
A(\xi,\eta)=\int_{-\gamma}^\eta a(\xi,\eta')\d\eta', \quad\eta<\gamma.
\]
Suppose that $\varphi_N\in C_0^\infty([-1,1])$ is a function as in Proposition \ref{LHseq} that is in addition even with $\int\varphi_N=1$, $\varphi_{N,\eps}(\xi)=\eps^{-1}\varphi_N(\eps^{-1}\xi)$ and fix $\eta=\gamma>0$. By a change of variables ($y=\xi x+\gamma,x\neq0$) we have the following identity between means of the weighted Radon transform of moments of $f$ (over an $\eps$-wide cone of lines through $\eta=\gamma$) and moments of the means $\mathcal{M}_{\eps,\gamma}[fm_\gamma]$ (where $m_\gamma$ is defined in \eqref{m_gamma} after Definition \ref{d:hor}):
\begin{multline*}
\int_{-\eps}^\eps R_m[x^kf](\xi,\gamma)\varphi_{N,\eps}(\xi)\d\xi=\int_{-\eps}^\eps \int_\R x^kf(x,\xi x+\gamma)m(x,\xi,\gamma)\d x\,\varphi_{N,\eps}(\xi)\d\xi\\
=\int_{|x|\leq x_{\eps,\gamma}} x^k\mathcal{M}_{\eps,\gamma}[fm_\gamma](x)\d x.
\end{multline*}

We define $D_a$ and $D_b$ as before and assuming that condition \eqref{eq:boman_cond} holds, we also had the identity
\begin{equation}\label{eq:id1}
D_aR_m[xf](\xi,\eta)=D_bR_m[f](\xi,\eta).
\end{equation}
If we introduce $g_k(\xi,\eta)=R_m[x^kf](\xi,\eta)$, abbreviating the $k$:th Radon moment we also get from \eqref{eq:id1},
\begin{equation}\label{eq:id2}
D_ag_k(\xi,\eta)=D_bg_{k-1}(\xi,\eta).
\end{equation}
Consequently, $g_k=D_a^{-1}D_bg_{k-1}$, where (for $h=h(\xi,\eta)$)
\[
D_a^{-1}h(\xi,\eta)=\e^{-A(\xi,\eta)}(H\circledast_\eta\e^{A}h)(\xi,\eta)=\e^{-A(\xi,\eta)}\int_{-\infty}^\eta\e^{A(\xi,\eta')}h(\xi,\eta')\d\eta'.
\]
In the above $\circledast_\eta$ is convolution in the $\eta$-variable and $A(\xi,\eta)=\int_{-\infty}^\eta a(\xi,\eta')\d\eta'$. Introducing $\psi_A(\xi,\eta',\eta)=\e^{A(\xi,\eta')-A(\xi,\eta)}$, we have shown
\begin{equation}\label{eq:id3}
g_k(\xi,\eta)=\int_{-\infty}^\eta\psi_A(\xi,\eta,\eta')D_bg_{k-1}(\xi,\eta')\d\eta'.
\end{equation}
Iterating we get $g_k(\xi,\eta)=(D_a^{-1}D_b)^kg_0(\xi,\eta)$, where $g_0(\xi,\eta)=R_m[f](\xi,\eta)$. We will return to this identity after presenting some necessary simple lemmas on operators with structure similar to $(D_a^{-1}D_b)^k$. Our main goal of this section will be to prove Proposition \ref{p:p2}.

Denote by $\mathcal P$ the set of integral operators $P$ in the $\eta$-variable with $\xi$ as a parameter of the form 
\begin{equation*}
Pu(\xi, \eta) = \int_{-\infty}^{\eta} p(\xi, \eta, \eta') u(\xi, \eta') \d\eta' , 
\end{equation*}
where $p(\xi, \eta, \eta')$ is a smooth function of all variables. The elements of $\mathcal P$ will be considered as linear operators on the set of smooth functions $u(\xi, \eta)$ that are supported in some halfspace $\eta \ge c$. Assume for simplicity that $c=0$ from here on, but later $-\gamma$ will take the role of $c$.

It is clear that $\mathcal P$ is a ring under composition. For each integer $k\ge 0$ we define the subring $\mathcal P_k$ that is generated by all products of $k$ factors 
\begin{equation*}
P_1 P_2 \ldots P_k
\end{equation*} 
with each $P_j \in \mathcal P$.  We shall denote by $\del_{\xi}P$ the operator with Schwartz kernel 
$\del_{\xi} p(\xi, \eta, \eta')$. 
It is clear that $\mathcal P_k$ is a two-sided ideal in $\mathcal P$ and that  
$P \in \mathcal P_k$ implies 
$\del_{\xi} P \in \mathcal P_k$. We note also that  $\mathcal P_1 = \mathcal P$. 
We shall also consider the operator $\del_{\xi}: \, u \mapsto \del_{\xi}u$.  Note that 
\begin{equation}       \label{pxiR}
 \del_{\xi} \circ R  =  R \, \del_{\xi}   +  \del_{\xi}R
\end{equation}
for all operators $R \in \mathcal P$. 

The operator $D_a^{-1} D_b$ can be written $P \del_{\xi} + Q$ for some $P$
 and $Q$ in $\mathcal P$. Using \eqref{pxiR} we can alternatively write 
 $D_a^{-1} D_b = \del_{\xi} \circ P + Q_1$ with $Q_1 = Q - \del_{\xi} P$. 

\begin{lemma}
The operator $P$ is in $\mathcal P_k$ if an only if its kernel can be factored 
\begin{equation*}
p(\xi, \eta, \eta') = (\eta - \eta')^{k-1} p_0(\xi, \eta, \eta')
\end{equation*}
for some smooth function $p_0(\xi, \eta, \eta')$. 
\end{lemma}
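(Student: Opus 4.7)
The plan is to prove both implications by induction on $k$, built around the composition formula for $P_1, P_2 \in \mathcal{P}$: interchanging the order of integration shows that $P_1 P_2$ has kernel
\[
\int_{\eta'}^{\eta} p_1(\xi, \eta, \tau) p_2(\xi, \tau, \eta') \d\tau,
\]
and the substitution $\tau = \eta' + t(\eta - \eta')$ rewrites this as $(\eta - \eta')$ times a smooth function of $(\xi, \eta, \eta')$. This identity will be the workhorse for both directions.

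For the forward direction, the target factorization is preserved under finite sums, so it suffices to treat $P = P_1 \cdots P_k$. The case $k = 1$ is trivial. Inductively, assume $P_1 \cdots P_{k-1}$ has kernel $(\eta - \tau)^{k-2} r_0(\xi, \eta, \tau)$ with $r_0$ smooth. Composing with $P_k$ yields kernel
\[
\int_{\eta'}^{\eta} (\eta - \tau)^{k-2} r_0(\xi, \eta, \tau) p_k(\xi, \tau, \eta') \d\tau,
\]
and the substitution $\tau = \eta' + t(\eta - \eta')$ pulls out the factor $(\eta - \eta')^{k-1}$, leaving a smooth $p_0(\xi, \eta, \eta')$ given as an integral over $t \in [0,1]$.

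For the reverse direction, I will exhibit an explicit $k$-fold factorization of an operator $P$ with smooth kernel $(\eta - \eta')^{k-1} p_0(\xi, \eta, \eta')$. Let $H \in \mathcal{P}$ denote the operator with kernel identically $1$; by Cauchy's iterated-integration formula, $H^{k-1}$ has kernel $(\eta - \tau)^{k-2}/(k-2)!$. I seek $\tilde{P} \in \mathcal{P}$ with smooth kernel $\tilde{p}$ such that $P = H^{k-1} \tilde{P}$, i.e.
\[
(\eta - \eta')^{k-1} p_0(\xi, \eta, \eta') = \int_{\eta'}^{\eta} \frac{(\eta - \tau)^{k-2}}{(k-2)!} \tilde{p}(\xi, \tau, \eta') \d\tau.
\]
Since the right-hand side is the $(k-1)$-fold iterated antiderivative of $\tilde{p}(\xi, \cdot, \eta')$ based at $\eta'$, inversion forces
\[
\tilde{p}(\xi, \tau, \eta') = \del_\eta^{k-1}\bigl[(\eta - \eta')^{k-1} p_0(\xi, \eta, \eta')\bigr]\big|_{\eta = \tau},
\]
which is smooth by Leibniz's rule. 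The compatibility conditions — that $F(\eta) := (\eta - \eta')^{k-1} p_0(\xi, \eta, \eta')$ vanishes together with its first $k-2$ $\eta$-derivatives at $\eta = \eta'$ — are automatic since $(\eta - \eta')^{k-1}$ has a zero of order $k-1$ there. Hence $P = H^{k-1} \tilde{P} \in \mathcal{P}_k$.

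The only substantive point is in the reverse direction: one must observe that $k-1$ copies of the trivial operator $H$ already account for the factor $(\eta - \eta')^{k-1}$, and that an arbitrary smooth $p_0$ can then be recovered by prescribing the final factor as the appropriate $(k-1)$st $\eta$-derivative of the target kernel. The forward direction is routine once the substitution $\tau = \eta' + t(\eta - \eta')$ is in hand.
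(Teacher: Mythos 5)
Your proof is correct and follows essentially the same route as the paper: the ``only if'' direction is the same composition-kernel computation with the substitution $v=\eta'+v_1(\eta-\eta')$, and your ``if'' direction produces the same factorization through powers of the integration operator $H$ --- you simply write it in closed form as $P=H^{k-1}\tilde P$ with $\tilde p=\partial_\eta^{k-1}p\big|_{\eta=\tau}$, where the paper peels off one factor of $H$ at a time by differentiating $Pu$ and iterating. Your explicit treatment of finite sums of $k$-fold products and of the boundary compatibility conditions at $\eta=\eta'$ is extra care that the paper leaves implicit, but it does not change the substance of the argument.
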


\begin{proof}   
Since $\xi$ plays no role in the argument we shall forget about it. 
Assume that 
$p(\eta, \eta') = (\eta - \eta')  q(\eta, \eta')$. Then 
\begin{multline*}
\del_\eta Pu(\eta) = \int_0^\eta q(\eta,\eta')u(\eta')\d\eta' + \int_0^\eta (\eta-\eta') q'_\eta(\eta,\eta') u(\eta') \d\eta'\\ 
=  \int_0^\eta w(\eta,\eta')u(\eta')\d\eta' = Wu(\eta) , 
\end{multline*}
where 
\begin{equation*}
w(\eta,\eta') = q(\eta,\eta') + (\eta-\eta') q'_\eta(\eta,\eta'), \quad 0 < \eta' < \eta .  
\end{equation*}
It follows that $P = HW$, where $H$ is the integration operator $Hu(t) = \int_0^t u(s) ds$, 
hence $P \in \mathcal P_2$. Repeated use of this argument proves that $P \in \mathcal P_k$ if $p(\eta,\eta')$ is divisible by $(\eta-\eta')^{k-1}$.  Conversely, let $P$ and $Q$ be operators with kernels $p(\eta,\eta')$ and $q(\eta,\eta')$, respectively. Then, by a simple change of variables, the kernel of $R = P Q$ can be seen to be equal to 
\begin{equation}    \label{r1}
r(\eta,\eta') = \int_{\eta'}^\eta p(\eta, v) q(v, \eta')\d v . 
\end{equation}
Another change of variable $v = \eta' + v_1(\eta-\eta')$ gives 
\begin{equation}     \label{r2}
r(\eta,\eta') = (\eta-\eta')\int_0^1 p(\eta, \eta' + v_1(\eta-\eta')) q(\eta' + v_1(\eta-\eta'), \eta') \d v_1 ,  
\end{equation}
which proves the statement for the case $k=2$. (The "only if" part is trivial if $k=1$.)  An obvious induction argument proves the general case.
\end{proof}

\begin{lemma}\label{l:l2}
Let $P$, $Q$, and $R = PQ$ be as above, and assume that 
\begin{equation*}
p(\eta,\eta') = (\eta-\eta')^{k-1} p_0(\eta,\eta')  , 
\end{equation*}
where $|p_0(\eta,\eta')| \le M_1$ and that $|q(\eta,\eta')| \le M_2$.  
Then 
\begin{equation*}
r(\eta,\eta') = (\eta-\eta')^{k} r_0(\eta,\eta') ,  
\end{equation*}
where $|r_0(\eta,\eta')| \le M_1 M_2/k$.  
\end{lemma}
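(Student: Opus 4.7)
The plan is to piggy-back on the integral representation for the kernel of a composition that was derived in the proof of the preceding lemma. Recall that with the substitution $v = \eta' + v_1(\eta-\eta')$, the kernel of $R = PQ$ is
\[
r(\eta,\eta') = (\eta-\eta')\int_0^1 p(\eta, \eta'+v_1(\eta-\eta'))\, q(\eta'+v_1(\eta-\eta'), \eta')\, \d v_1.
\]
So the key step is just to insert the hypothesized factorization of $p$ into this formula and extract the remaining $(\eta-\eta')$-power.

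Under the substitution one has $\eta - v = \eta - \eta' - v_1(\eta-\eta') = (1-v_1)(\eta-\eta')$. Hence
\[
p(\eta, \eta'+v_1(\eta-\eta')) = (1-v_1)^{k-1}(\eta-\eta')^{k-1}\, p_0(\eta, \eta'+v_1(\eta-\eta')).
\]
Substituting gives
\[
r(\eta,\eta') = (\eta-\eta')^{k}\int_0^1 (1-v_1)^{k-1}\, p_0(\eta, \eta'+v_1(\eta-\eta'))\, q(\eta'+v_1(\eta-\eta'), \eta')\, \d v_1,
\]
so the required factor $(\eta-\eta')^k$ comes out and we can read off $r_0$ as the remaining integral. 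Smoothness of $r_0$ is inherited from that of $p_0$ and $q$.

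For the bound, use $|p_0| \le M_1$ and $|q| \le M_2$ under the integral to get
\[
|r_0(\eta,\eta')| \le M_1 M_2 \int_0^1 (1-v_1)^{k-1}\, \d v_1 = \frac{M_1 M_2}{k},
\]
which is the claimed estimate. There is no real obstacle here; the only thing worth flagging is the bookkeeping of the variable change that turns $(\eta-v)^{k-1}$ into $(1-v_1)^{k-1}(\eta-\eta')^{k-1}$, since that is exactly the mechanism by which the composition raises the vanishing order of the kernel on the diagonal by one and simultaneously produces the factorial-like denominator $1/k$ that drives the iterated gain in regularity.
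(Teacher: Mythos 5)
Your proof is correct and follows the same route as the paper: both insert the factorization of $p$ into the composition formula $r(\eta,\eta')=\int_{\eta'}^{\eta}p(\eta,v)q(v,\eta')\,\mathrm{d}v$, apply the substitution $v=\eta'+v_1(\eta-\eta')$ so that $\eta-v=(1-v_1)(\eta-\eta')$, and bound the resulting integral by $M_1M_2\int_0^1(1-v_1)^{k-1}\,\mathrm{d}v_1=M_1M_2/k$. No gaps.
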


\begin{proof}
Arguing as in  \eqref{r2} we obtain $r(\eta,\eta') = (\eta-\eta')^{k} r_0(\eta,\eta')$,  where
\begin{equation}     \label{r0}
r_0(\eta,\eta') =   \int_0^1 (1 - v_1)^{k-1} \,  p_0(\eta, \eta' + v_1(\eta-\eta')) q(\eta' + v_1(\eta-\eta'), \eta') \d v_1 . 
\end{equation}
By the assumption it follows that 
\begin{equation}     \label{lem2} 
|r_0(\eta,\eta')| \le   M_1 M_2   \int_0^1 (1 - v_1)^{k-1}  dv_1 
  \le  M_1 M_2 /k , 
\end{equation}
which completes the proof.
\end{proof}



\begin{proposition}\label{p:p2}
The operator 
$(D_a^{-1} D_b)^k = (\del_{\xi} \circ P + Q)^k$  can be written 
\begin{equation}       \label{eq11}
(D_a^{-1} D_b)^k =   \sum_{j=0}^k \del_{\xi}^j \circ S_{j,k} 
\end{equation}
where $S_{j,k} \in \mathcal P_k$. Assume that $C\geq1$ is such that the derivatives of the Schwartz kernels of $P$ and $Q$ are bounded by,  
\begin{equation}       \label{eq1}
|\del_{\xi}^{n} p(\xi, \eta, \eta')| \le C^{n+1}n!, \ \textrm{ and } \  
|\del_{\xi}^{n} q(\xi, \eta, \eta')|   \le C^{n+1}n!, \quad \textrm{$n\in\N$}   .
\end{equation}
Then the Schwartz kernels of $S_{j,k}$ can be estimated with
\begin{equation}       \label{prop3}
|s_{j,k}(\xi, \eta, \eta')| \le (\beta C)^{2k-j} (k - j)! \frac{(\eta - \eta')^{k-1}}{(k-1)!} .   
\end{equation}
where $\beta\geq1+\sqrt3$.
\end{proposition}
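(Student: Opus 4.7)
The plan is to prove both the structural decomposition \eqref{eq11} and the kernel bound \eqref{prop3} by a single induction on $k$. The base case $k=1$ is immediate from $D_a^{-1}D_b = \del_\xi \circ P + Q$, which gives $S_{1,1}=P$ and $S_{0,1}=Q$ in $\mathcal P_1=\mathcal P$; moreover \eqref{eq1} furnishes \eqref{prop3} for $k=1$ as soon as $\beta\ge 1$, since the right side reduces to $(\beta C)^{2-j}(1-j)!$ and dominates $|p|$ (for $j=1$) or $|q|$ (for $j=0$).

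For the inductive step I would write $(D_a^{-1}D_b)^{k+1} = (D_a^{-1}D_b)^k \circ (\del_\xi\circ P+Q)$, substitute the form \eqref{eq11} at level $k$, and push the inner $\del_\xi$ past each $S_{j,k}$ using $S_{j,k}\circ\del_\xi = \del_\xi\circ S_{j,k} - \del_\xi S_{j,k}$, which is just a rearrangement of \eqref{pxiR}. Collecting terms by the number of $\del_\xi$'s standing on the far left yields the recursion
\[
S_{j,k+1} = S_{j-1,k}\,P + S_{j,k}\,Q - (\del_\xi S_{j,k})\,P, \qquad 0\le j\le k+1,
\]
with the conventions $S_{-1,k}=S_{k+1,k}=0$. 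Since $\mathcal P_k$ is a two-sided ideal in $\mathcal P$ and is stable under $P\mapsto \del_\xi P$, each summand lies in $\mathcal P_{k+1}$; this settles the structural part of the proposition.

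For the kernel estimate I would strengthen the inductive hypothesis to track all $\xi$-derivatives. Writing $s_{j,k}=(\eta-\eta')^{k-1}s_{j,k}^{(0)}$ via the preceding lemma, the strengthened claim is
\[
|\del_\xi^n s_{j,k}^{(0)}(\xi,\eta,\eta')| \le \frac{(\beta C)^{2k-j+n}(k-j+n)!}{(k-1)!} \qquad \text{for all } n \ge 0.
\]
To close this, I would feed each of the three summands of the recursion through the integral representation \eqref{r0} used in the proof of Lemma \ref{l:l2}, differentiate $n$ times in $\xi$ under the $v_1$-integral, and expand with Leibniz using \eqref{eq1} together with the strengthened hypothesis. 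The relevant combinatorial sums are controlled by
\[
\sum_{m=0}^n \frac{\beta^m(l+m)!}{m!} \le \frac{\beta^{n+1}(l+n)!}{(\beta-1)\,n!},
\]
which follows from the elementary inequality $\frac{n!(l+m)!}{m!(l+n)!}\le 1$ valid for $0\le m\le n$ and $l\ge 0$.

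The main obstacle will be balancing the three contributions so that their sum is at most the desired bound for $|\del_\xi^n s_{j,k+1}^{(0)}|$. After simplification each of the terms $S_{j-1,k}P$, $S_{j,k}Q$, $(\del_\xi S_{j,k})P$ produces a factor of the form $(\beta-1)^{-1}$ or $(\beta(\beta-1))^{-1}$ times the target, and requiring the total to be $\le 1$ forces $\beta$ to satisfy a quadratic inequality whose smallest admissible solution yields the stated threshold $\beta\ge 1+\sqrt 3$. Once this is verified the induction closes uniformly in $j,k,n$, and the $n=0$ instance is precisely \eqref{prop3}.
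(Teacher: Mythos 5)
Your proposal is correct and follows essentially the same route as the paper: the same recursion $S_{j,k+1}=S_{j,k}Q-(\del_\xi S_{j,k})P+S_{j-1,k}P$ obtained by commuting $\del_\xi$ past the $S_{j,k}$, the same strengthened inductive hypothesis bounding all $\xi$-derivatives of the reduced kernels, the same Leibniz/combinatorial control of the resulting sums, and the same quadratic inequality in $\beta$ producing the threshold $1+\sqrt3$. No gaps to report.
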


We postpone the proof of the above proposition to the end of this section. An important consequence of Proposition \ref{p:p2} is that we are now able to prove:

\begin{proposition} \label{p:p2mom}
The moments of  $\mathcal M_{\eps, \gamma}[fm_\gamma]$ satisfy the estimates 
\begin{equation}       \label{cor}
\left| \int x^k \mathcal M_{\eps, \gamma}[fm_\gamma](x) dx \right| \le \left(\frac{C_1}{\eps}\right)^{k+1}\e^N\|R_m[f]\|_{L^1(R_{\eps,\gamma})}   
\end{equation}
for some constant $C_1>0$ and all $1\leq k\leq N$ for some $N\in\N$.
\end{proposition}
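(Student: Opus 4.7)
The plan is to combine the identity
\[
\int_{-\eps}^\eps R_m[x^k f](\xi,\gamma)\,\varphi_{N,\eps}(\xi)\,\d\xi = \int x^k \mathcal{M}_{\eps,\gamma}[fm_\gamma](x)\,\d x,
\]
derived just before the statement, with the operator factorization $R_m[x^k f] = g_k = (D_a^{-1}D_b)^k g_0$ and its expansion from Proposition \ref{p:p2},
\[
g_k(\xi,\gamma) = \sum_{j=0}^k \del_\xi^j (S_{j,k}g_0)(\xi,\gamma), \qquad g_0 = R_m[f].
\]
Since $\varphi_{N,\eps}$ is compactly supported in $(-\eps,\eps)$, I would integrate by parts $j$ times in $\xi$ (all boundary terms vanish) to move each $\del_\xi^j$ off $S_{j,k}g_0$ and onto $\varphi_{N,\eps}$, obtaining
\[
\int x^k \mathcal{M}_{\eps,\gamma}[fm_\gamma](x)\,\d x = \sum_{j=0}^k (-1)^j \int_{-\eps}^\eps (S_{j,k}g_0)(\xi,\gamma)\,\del_\xi^j\varphi_{N,\eps}(\xi)\,\d\xi.
\]

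To estimate each term I would use two ingredients. From Proposition \ref{p:p2}, $|s_{j,k}(\xi,\gamma,\eta')| \le (\beta C)^{2k-j}(k-j)!(2\gamma)^{k-1}/(k-1)!$ for $\eta'\in[-\gamma,\gamma]$ (using $|\gamma-\eta'|\le 2\gamma$), and from the Gevrey-type construction of $\varphi_N$ described in the appendix, $\|\del_\xi^j\varphi_{N,\eps}\|_\infty \le C^{j+1}N^j/\eps^{j+1}$ for $j\le N$. Pulling sup-norms out of the double integral against $g_0$, which contributes $\|R_m[f]\|_{L^1(R_{\eps,\gamma})}$, the $j$-th term is bounded by
\[
(\beta C)^{2k-j} C^{j+1}(k-j)!\,\frac{(2\gamma)^{k-1}}{(k-1)!}\,\frac{N^j}{\eps^{j+1}}\,\|R_m[f]\|_{L^1(R_{\eps,\gamma})}.
\]
A pleasant cancellation occurs: $(\beta C)^{2k-j}C^{j+1} = C^{2k+1}\beta^{2k-j}$, so the $C$-power is independent of $j$ and factors out of the sum.

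What remains is to control $\sum_{j=0}^k \beta^{2k-j}(k-j)!\,N^j\eps^{-(j+1)}$, which after the substitution $m=k-j$ reads $(\beta^k N^k/\eps^{k+1})\sum_{m=0}^k m!(\beta\eps/N)^m$. The restriction $k\le N$ is essential here: it permits the elementary bound $m!\le N^m$, and hence $\sum_{m=0}^k m!(\beta\eps/N)^m \le \sum_{m=0}^k (\beta\eps)^m \le (k+1)\max(1,\beta\eps)^k$, which is of the form $(\mathrm{const})^k$. Combining this with $(2\gamma)^{k-1}$ and the standard bound $N^k/(k-1)! = k\,N^k/k! \le k\,e^N$, and absorbing every $(\mathrm{const})^k$ factor as well as the polynomial factor in $k$ and $N$ into a single constant $C_1$, I arrive at
\[
|m_k(\mathcal{M}_{\eps,\gamma}[fm_\gamma])|\le\left(\frac{C_1}{\eps}\right)^{k+1} e^N \|R_m[f]\|_{L^1(R_{\eps,\gamma})}.
\]
The main technical hurdle is reconciling the two factorial behaviors $(k-j)!$ from the iterated kernel of Proposition \ref{p:p2} and $N^j$ from the Gevrey derivatives of $\varphi_N$; the hypothesis $k\le N$ is precisely what lets one dominate $(k-j)!$ by $N^{k-j}$, after which the tail estimate $N^{k-1}/(k-1)!\le e^N$ produces the final $e^N$ factor.
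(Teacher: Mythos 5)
Your proposal is correct and follows essentially the same route as the paper: the same identity relating moments of $\mathcal M_{\eps,\gamma}[fm_\gamma]$ to $\int(D_a^{-1}D_b)^k g_0\,\varphi_{N,\eps}\,\d\xi$, the same integration by parts against the expansion $\sum_j\del_\xi^j\circ S_{j,k}$ from Proposition \ref{p:p2}, and the same kernel and test-function bounds. The only (harmless) divergence is in the final summation: you dominate $(k-j)!$ by $N^{k-j}$ and then use $N^k/k!\le \e^N$, whereas the paper writes $\frac{(k-j)!N^j}{(k-1)!}=k\,\binom{k}{j}^{-1}\frac{N^j}{j!}$ and sums $\sum_j N^j/j!\le \e^N$; both give the stated bound.
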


\begin{proof}  
\begin{equation*}
\int x^k \mathcal M_{\eps, \gamma}[f](x) \d x = \int (D_a^{-1} D_b)^k g_0(\xi, \gamma) \varphi_{N,\eps}(\xi) \d\xi ,  
\end{equation*} 
where $g_0 = R_m[f]$. 
(The first factor in the  integrand should of course be interpreted 
$((D_a^{-1} D_b)^k g_0)(\xi, \gamma)$).
Using Proposition \ref{p:p2} and integration by parts we obtain 
\begin{equation*}
\int x^k \mathcal M_{\eps, \gamma}[f](x)\d x 
= \sum_{j=0}^k (-1)^j \int (S_{j,k} g)(\xi, \gamma) \del_{\xi}^j \varphi_{N,\eps}(\xi) \d\xi . 
\end{equation*}
Thus
\begin{multline*}
\left|\int x^k \mathcal M_{\eps, \gamma}[f](x)\d x \right|\leq\sum_{j=0}^k\|s_{j,k}\del_\xi^j\varphi_{N,\eps}\|_\infty\int_{-\gamma}^\gamma\int_{-\eps}^\eps|g_0(\xi,\eta)|\d\xi\d\eta\\
=\|g_0\|_{L^1(R_{\eps,\gamma})}\sum_{j=0}^k\|s_{j,k}\|_\infty\|\del_\xi^j\varphi_{N,\eps}\|_\infty.
\end{multline*}
Now we use that
\begin{equation*}
\sup_{|\eta|<\gamma}|s_{j,k}(\xi, \gamma,\eta)| \le \frac{(\beta C)^{2k-j} (k-j)! (2\gamma)^{k-1}}{(k-1)!} . 
\end{equation*}
Combining this with the estimate 
\begin{equation*}
\|\del_{\xi}^j \varphi_{N,\eps}(\xi)\|_\infty\le \left(\frac{C_2}{\eps}\right)^{j+1}N^j , \quad j\le N ,
\end{equation*}
we obtain 
\begin{equation*}
\|s_{j,k}\|_\infty\|\del_\xi^j\varphi_{N,\eps}\|_\infty\leq\frac{(\beta C)^{2k-j}(k-j)!(2\gamma)^{k-1}C_2^{j+1}N^j}{(k-1)!\eps^{j+1}}.
\end{equation*}
Summing over $j=1,2,\dots,k$ we finally get
\begin{multline*}
\left|\int x^k \mathcal M_{\eps, \gamma}[f](x)\d x \right|\leq\sum_{j=0}^k\frac{(\beta C)^{2k-j}(k-j)!(2\gamma)^{k-1}C_2^{j+1}N^j}{(k-1)!\eps^{j+1}}\|g_0\|_{L^1(R_{\eps,\gamma})}\\
\leq\frac{(\beta C)^{2k}C_2^{k+1}(2\gamma)^{k-1}k}{\eps^{k+1}}\|g_0\|_{L^1(R_{\eps,\gamma})}\sum_{j=0}^k\frac{(k-j)!j!}{k!}\frac{N^j}{j!}\\\leq\left(\frac{C_1}{\eps}\right)^{k+1}\e^N\|g_0\|_{L^1(R_{\eps,\gamma})}.
\end{multline*}
\end{proof}
Again, note that for $k=0$ \eqref{cor} does not make sense. So we introduce some new norm, similarly as for the standard Radon transform in \eqref{newnorm}. Then we have shown an analogue of Proposition \ref{p:mom} also for $m$ satisfying \eqref{eq:boman_cond} with $a,b\in C^\omega(\R^2)$. Thus we can deduce that Theorem \ref{main2} (and corresponding Theorems \ref{main2var} and \ref{f_stability_var} for $\alpha>1/2$) also hold for $f$ and $R$ replaced with $fm_\gamma$ and $R_m$ respectively.

In order to prove Proposition \ref{p:p2} we first present some auxiliary lemmas.
\begin{lemma}\label{l:l4}
Let $u $ and $v$ be functions (of one variable) satisfying the estimates 
\begin{equation*}
|\del^n u| \le A_1 B_1^n n! , \qquad  |\del^n v| \le A_2 B_2^n n! \quad \textrm{for all $n\in\N$} .  
\end{equation*}
Then with $A = \max(A_1,A_2), B = \max(B_1, B_2)$
\begin{equation*}
|\del^n(uv)| \le A^2 B^n (n+1)!  . 
\end{equation*}
\end{lemma}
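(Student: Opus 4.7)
The plan is to apply the Leibniz rule for the $n$-th derivative of a product, substitute the hypothesized bounds, and observe that the binomial coefficients cancel nicely with the factorials arising from the bounds.

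First I would write
\[
\partial^n(uv) = \sum_{k=0}^n \binom{n}{k} \partial^k u \cdot \partial^{n-k} v,
\]
and then use the assumed estimates, together with $A_1, A_2 \le A$ and $B_1, B_2 \le B$, to get
\[
|\partial^n(uv)| \le \sum_{k=0}^n \binom{n}{k} A_1 B_1^k k! \cdot A_2 B_2^{n-k} (n-k)! \le A^2 B^n \sum_{k=0}^n \binom{n}{k} k!(n-k)!.
\]
The decisive step is the identity $\binom{n}{k} k!(n-k)! = n!$, which turns the sum into $(n+1) \cdot n! = (n+1)!$, yielding the claimed bound $|\partial^n(uv)| \le A^2 B^n (n+1)!$.

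There is no real obstacle here; the estimate is essentially a bookkeeping exercise built on the standard observation that analytic-type bounds of the form $|\partial^n u| \le A B^n n!$ are well-behaved under multiplication, at the cost of promoting $n!$ to $(n+1)!$ (which is what forces the factor $k-j$ type losses in the applications of this lemma elsewhere in the paper). The only mild subtlety is that $A$ appears squared rather than to the first power, but this is forced by taking the maximum of the two prefactors independently of one another rather than combining them multiplicatively, and causes no issue in the estimate.
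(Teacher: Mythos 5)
Your proof is correct and is essentially identical to the paper's: both apply the Leibniz rule, use the cancellation $\binom{n}{k}k!(n-k)! = n!$, and count the $n+1$ terms of the sum to obtain the factor $(n+1)!$. Nothing further is needed.
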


\begin{proof}
By Leibnitz' formula 
\begin{align}   \label{lem4} 
|\del^n(uv)| & \le \sum_{p+q=n} \frac{n!}{p! q!} A_1 B^p p! A_2 B^q q! 
= n! A_1 A_2 \sum_{p+q=n} B^{p+q}   \notag \\ 
& \leq  n! A^2 (n+1) B^n = A^2B^n (n+1)!  . 
\end{align}
\end{proof}

Combining Lemma \ref{l:l2} and Lemma \ref{l:l4} we immediately obtain the following estimate.

\begin{lemma}\label{l:l5}
Let $P\in \mathcal P_k$, $Q \in \mathcal P$, $R = PQ$, and assume that 
\begin{equation*}
|\del_{\xi}^n p(\xi, \eta, \eta')|  \le   AB^n n!  \frac{(\eta - \eta')^{k-1}}{(k-1)!}, \quad 
|\del_{\xi}^n q(\xi, \eta, \eta')|  \le AB^n n! \quad \textrm{for $n\in\N$. }
\end{equation*}
Then 
\begin{equation*}
|\del_{\xi}^n r(\xi, \eta, \eta')| \le A^2B^n (n+1)! \frac{(\eta - \eta')^{k}}{k!}    
\quad \textrm{for all $n\in\N$} . 
\end{equation*}
\end{lemma}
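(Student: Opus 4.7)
The plan is to combine Lemma \ref{l:l2} with the Leibnitz-type estimate of Lemma \ref{l:l4}, differentiating under the integral sign in the explicit representation \eqref{r0}. Since $P \in \mathcal{P}_k$, write $p(\xi,\eta,\eta') = (\eta-\eta')^{k-1} p_0(\xi,\eta,\eta')$, so the hypothesis on $p$ translates to
\[
|\partial_\xi^n p_0(\xi,\eta,\eta')| \le \frac{AB^n n!}{(k-1)!}, \qquad n \in \N .
\]
Then by the identity \eqref{r0} (applied with the extra $\xi$ variable as a spectator), the kernel of $R = PQ$ satisfies $r(\xi,\eta,\eta') = (\eta-\eta')^k r_0(\xi,\eta,\eta')$ with
\[
r_0(\xi,\eta,\eta') = \int_0^1 (1-v_1)^{k-1} p_0\bigl(\xi,\eta,\eta'+v_1(\eta-\eta')\bigr) \, q\bigl(\xi,\eta'+v_1(\eta-\eta'),\eta'\bigr) \, d v_1 .
\]

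Next, I would differentiate this representation $n$ times in $\xi$ under the integral sign. The $\eta'$-shift $\eta' \mapsto \eta' + v_1(\eta-\eta')$ does not affect $\xi$-derivatives, so applying Leibnitz' rule to the product $p_0 \cdot q$ in the integrand (essentially the content of Lemma \ref{l:l4}) gives
\[
|\partial_\xi^n (p_0 q)| \le \sum_{j=0}^n \binom{n}{j} \frac{A B^j j!}{(k-1)!}\, A B^{n-j} (n-j)! = \frac{A^2 B^n (n+1)!}{(k-1)!} .
\]

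Finally, integrating this bound against $(1-v_1)^{k-1}$ yields
\[
|\partial_\xi^n r_0(\xi,\eta,\eta')| \le \frac{A^2 B^n (n+1)!}{(k-1)!} \int_0^1 (1-v_1)^{k-1} \, d v_1 = \frac{A^2 B^n (n+1)!}{k!} ,
\]
so that $|\partial_\xi^n r| = (\eta-\eta')^k |\partial_\xi^n r_0|$ meets the claimed bound.

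There is no real obstacle here; the only bookkeeping point is that the factor $1/k$ produced by $\int_0^1 (1-v_1)^{k-1}\, d v_1$ is precisely what is needed to promote the $(k-1)!$ inherited from the hypothesis on $p$ into the denominator $k!$ appearing in the conclusion. Everything else is a direct combination of the two preceding lemmas.
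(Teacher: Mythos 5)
Your proof is correct and follows essentially the same route as the paper's: factor the kernel of $P$, use the change-of-variables representation of the kernel of $PQ$ from Lemma \ref{l:l2}, differentiate under the integral sign with the Leibnitz product estimate of Lemma \ref{l:l4}, and let $\int_0^1(1-v_1)^{k-1}\d v_1=1/k$ upgrade the $(k-1)!$ to $k!$. The only cosmetic difference is that you carry the $1/(k-1)!$ inside the bound on $p_0$ rather than factoring it out of the kernel as the paper does.
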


\begin{proof}
Writing $p(\xi, \eta, \eta') = p_0(\xi, \eta, \eta') (\eta - \eta')^{k-1}/(k-1)!$ we have 
\begin{equation*}
r(\xi, \eta, \eta') =  \frac{(\eta - \eta')^{k}}{(k-1)!} \int_0^1 (1 - v)^{k-1} \,  
p_0(\xi, \eta, \eta' + v(\eta - \eta')) q(\xi, \eta' + v(\eta - \eta'), \eta') \d v . 
\end{equation*}
Operating with $\del_{\xi}^n$ under the integral sign and using Lemma \ref{l:l4} we obtain
\begin{multline*}
|\del_{\xi}^n r| \le  \frac{(\eta - \eta')^{k}}{(k-1)!} 
      \int_0^1 (1 - v)^{k-1} \,  A^2B^n (n+1)! \d v \\
  \le A^2 B^n (n+1)! \frac{(\eta - \eta')^{k}}{k!}  , 
\end{multline*}
which proves the assertion. 
\end{proof}

\begin{proof}[{\it Proof of Proposition \ref{p:p2}.}] 
The Schwartz kernel of $S_{j,k}\in\mathcal{P}_k$ can according to the arguments in the proof of Lemma \ref{l:l5} be written
\begin{equation}
s_{j,k}(\xi,\eta,\eta')\frac{(\eta-\eta')^{k-1}}{(k-1)!}.\label{eq2}
\end{equation}
The function $s_{j,k}$ satisfies
\begin{equation}
|\del_\xi^ns_{j,k}|\leq (\beta C)^{n+2k-j}(k-j+n)!\ ,\quad k,n\in\N, \beta\geq1+\sqrt3,C\geq1. \label{eq3}
\end{equation}
We prove \eqref{eq3} by induction. Observe that for $k=1$ we have $s_{0,1}=q$ and $s_{1,1}=p$ so \eqref{eq1} implies that \eqref{eq3} holds with $\beta=1$ in this case. Assuming that \eqref{eq11} and \eqref{eq3} hold for $k$, then
\begin{multline}
(D_a^{-1}D_b)^{k+1}=\left(\sum_{j=0}^{k}\del_\xi^j\circ S_{j,k}\right)(\del_\xi\circ P+Q)=\sum_{j=0}^{k}\del_\xi^j\circ S_{j,k}(\del_\xi\circ P+Q)\\
=\sum_{j=0}^{k}\del_\xi^j\circ(S_{j,k}\del_\xi\circ P+S_{j,k}Q)=\sum_{j=0}^{k}\del_\xi^j\circ((\del_\xi\circ S_{j,k}-\del_\xi S_{j,k})P+S_{j,k}Q)\\
=S_{0,k}Q-\del_\xi S_{0,k}P+\sum_{j=1}^k \del_{\xi}^j\circ(S_{j,k}Q-\del_\xi S_{j,k}P+S_{j-1,k}P)+\del_\xi^{k+1}\circ S_{k,k}P.\label{eq4}
\end{multline}
From the above we identify
\begin{eqnarray}
S_{0,k+1}&=&S_{0,k}Q-\del_\xi S_{0,k}P,\label{eq21}\\
S_{j,k+1}&=&S_{j,k}Q-\del_\xi S_{j,k}P+S_{j-1,k}P,\quad1\leq j\leq k,\label{eq22}\\
S_{k+1,k+1}&=&S_{k,k}P.\label{eq23}
\end{eqnarray}
We will finish the proof by deriving estimates for $|\del_\xi^n(s_{j,k}p)|$ (identical arguments will work for $p$ replaced by $q$) and $|\del_\xi^n(\del_\xi s_{j,k}p)|$ for $0\leq j\leq k$. First, by \eqref{eq1} and \eqref{eq3}:
\begin{multline}
|\del_\xi^n(s_{j,k}p)|\leq\sum_{i=0}^n\binom{n}{i}|\del_\xi^{n-i}s_{j,k}||\del_\xi^ip|\\
\leq\sum_{i=0}^n\binom{n}{i}(\beta C)^{n-i+2k-j}(k-j+n-i)!C^{i+1}i!\\
=(\beta C)^{n+2k-j+1}(k-j+n)!\sum_{i=0}^n\frac{\binom{n}{i}}{\binom{n+k-j}{i}}\beta^{-i-1}\\
\leq(\beta C)^{n+2k-j+1}(k-j+n)!\sum_{i=0}^m\beta^{-i-1},\label{eq31}
\end{multline}
and
\begin{multline}
|\del_\xi^n(\del_\xi s_{j,k}p)|\leq\sum_{i=0}^n\binom{n}{i}(\beta C)^{n+1-i+2k-j}(k-j+n+1-i)!C^{i+1}i!\\
=(\beta C)^{n+2k-j+2}(k+1-j+n)!\sum_{i=0}^n\frac{\binom{n}{i}}{\binom{n+k+1-j}{i}}\beta^{-i-1}\\
\leq(\beta C)^{n+2k-j+2}(k+1-j+n)!\sum_{i=0}^m\beta^{-i-1}.\label{eq32}
\end{multline}
Using the two above estimates we can move on to
\begin{multline}
|\del_\xi^ns_{j,k+1}|\leq|\del_\xi^n(s_{j,k}q)|+|\del_\xi^n(\del_\xi s_{j,k}p)|+|\del_\xi^n(s_{j-1,k}p)|\\
\leq(\beta C)^{n+2k+2-j}(k+1-j+n)!\left(\sum_{i=0}^n\beta^{-i-1}\right)\left(\frac{1}{\beta C(k+1-j+n)}+1+\frac{1}{\beta C}\right).
\end{multline}
Now
\[
\sum_{i=0}^n\beta^{-i-1}\leq\frac1\beta\sum_{i=0}^\infty\frac1{\beta^i}=\frac1{\beta-1},
\]
and
\[
\frac{1}{\beta C(k+1-j+n)}+1+\frac{1}{\beta C}\leq1+\frac2\beta.
\]
We choose $\beta>1$ such that
\[
\frac1{\beta-1}\left(1+\frac2\beta\right)\leq1.
\]
The above inequality holds if
\[
\beta+2\leq\beta(\beta-1),
\]
or equivalently $\beta^2-2\beta-2=(\beta-1)^2-3\geq0$ which hold if $\beta\geq1+\sqrt3$. Hence, we have shown
\begin{equation}
|\del_\xi^ns_{j,k+1}|\leq(\beta C)^{n+2(k+1)-j}(k+1-j+n)!\ ,\quad 1\leq j\leq k.\label{eq41}
\end{equation}
In a very similar way we get for $j=0$
\begin{equation}
|\del_\xi^ns_{0,k+1}|\leq(\beta C)^{n+2(k+1)}(k+1+n)!\ ,\quad \beta\geq1+\sqrt2,\label{eq42}
\end{equation}
and for $j=k+1$
\begin{equation}
|\del_\xi^ns_{k+1,k+1}|\leq(\beta C)^{n+k+1}n!\ ,\quad \beta\geq\frac{1+\sqrt5}2.\label{eq43}
\end{equation}
This proves \eqref{eq3}, and taking $n=0$ we obtain \eqref{prop3}, and the proof of Proposition \ref{p:p2} is complete.
\end{proof}

\subsection{The case $a,b\in G^\sigma(\R^2)$}
\label{s:gevwts}
Next we sketch how one can get stability estimates also for more general weights $m$ that satisfy \eqref{eq:boman_cond} with $a,b$ belonging to Gevrey spaces, $G^\sigma(\R^2)$,  c.f.\ Definition \ref{def:Gevrey}. Observe that by Proposition \ref{prop:Gev2}, for every $\eta$, $\exp[A(\xi,\eta)-A(\xi,\gamma)]\in G^\sigma(\R)$, when considered as a function of $\xi$. Recall that
\[
A(\xi,\eta)=\int_{-\gamma}^\eta a(\xi,\eta')\d\eta', \quad\eta<\gamma.
\]
We can then, by doing only minor adjustments, prove the following version of Proposition \ref{p:mom}:

\begin{proposition}\label{prop:qest}
If $m$ satisfies equation \eqref{eq:boman_cond} with $a,b\in G^\sigma(\R^2)$ and $\supp f\subset\{(x,y);y\geq x^2\}$
\[
|m_k(\mathcal{M}_{\eps,\gamma}[fm_\gamma])|\leq \left(\frac{C}{\eps}\right)^{k+1}k!^s\|R_m[f]\|_{\eps,\gamma},\quad k=0,1,\dots
\]
where $s=\sigma-1$ and $C>0$.
\end{proposition}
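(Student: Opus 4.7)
The plan is to repeat the proof of Proposition \ref{p:p2mom} with the analytic factorials $n!$ systematically replaced by Gevrey factorials $n!^\sigma$, and with the analytic bump $\varphi_N$ replaced by a fixed test function $\varphi\in G_0^\sigma([-1,1])$ with $\int\varphi=1$, so that $\|\del_\xi^j\varphi_\eps\|_\infty\le (C/\eps)^{j+1}j!^\sigma$ for every $j\ge 0$.

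By Proposition \ref{prop:Gev2} the class $G^\sigma$ is stable under products, primitivation and composition with $\exp$, so that $a,b\in G^\sigma(\R^2)$ forces $\e^{\pm A},\e^{\pm B}$ to lie in $G^\sigma$ in the $\xi$-variable. Consequently the kernels $p,q$ of the operators $P,Q$ in the decomposition $D_a^{-1}D_b=\del_\xi\circ P+Q$ satisfy the Gevrey bounds
\[
|\del_\xi^n p(\xi,\eta,\eta')|,\;|\del_\xi^n q(\xi,\eta,\eta')|\le C^{n+1}n!^\sigma,\quad n\in\N.
\]

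Next I would re-run the induction of Proposition \ref{p:p2}. The central combinatorial input becomes
\[
\frac{\binom{n}{i}}{\binom{k-j+n}{i}^\sigma}\le 1 \quad\text{for }k\ge j,\ \sigma\ge 1,
\]
which, together with the identity $(k-j+n-i)!^\sigma\,i!^\sigma=(k-j+n)!^\sigma/\binom{k-j+n}{i}^\sigma$, reduces the Leibniz sum for $|\del_\xi^n(s_{j,k}p)|$ to the same type of geometric sum $\sum_i\beta^{-i}\le\beta/(\beta-1)$ that appears in the analytic case \eqref{eq31}--\eqref{eq32}. Inserting the resulting estimates into the recurrences \eqref{eq21}--\eqref{eq23} and fixing $\beta$ large enough once and for all (e.g.\ $\beta\ge 4$ so that $3/(\beta-1)\le 1$), induction on $k$ yields
\[
|\del_\xi^n s_{j,k}(\xi,\eta,\eta')|\le (\beta C)^{n+2k-j}(k-j+n)!^\sigma,
\]
and in particular $|s_{j,k}(\xi,\eta,\eta')|\le(\beta C)^{2k-j}(k-j)!^\sigma(\eta-\eta')^{k-1}/(k-1)!$.

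The rest of the argument follows Proposition \ref{p:p2mom} verbatim: integration by parts gives
\[
|m_k(\mathcal{M}_{\eps,\gamma}[fm_\gamma])|\le\|R_m[f]\|_{L^1(R_{\eps,\gamma})}\sum_{j=0}^k\|s_{j,k}\|_\infty\|\del_\xi^j\varphi_\eps\|_\infty,
\]
and inserting the kernel bound together with $(k-j)!^\sigma j!^\sigma\le k!^\sigma$ and $k!^\sigma/(k-1)!=k\cdot k!^s$ collapses the sum, after summing a geometric series in $j$ and absorbing $k$, $(2\gamma)^{k-1}$ and powers of $\beta C$ into a single constant, to $(C/\eps)^{k+1}k!^s\|R_m[f]\|_{\eps,\gamma}$ for $k\ge 1$. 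The case $k=0$ is handled exactly as in Remark \ref{p:mom0} by passing to the norm defined in \eqref{newnorm}. The main obstacle is the uniform closure of the Gevrey induction: one must verify that $\beta$ can be chosen independently of $k$, $j$ and $n$, which works here because each step of the recurrence contributes only the constant factor $\beta/(\beta-1)$ that can be absorbed once $\beta$ is taken sufficiently large.
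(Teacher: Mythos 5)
Your proposal is correct and follows essentially the same route as the paper, which itself only sketches this proof by saying one should replace the H\"ormander sequence $\varphi_N$ by a fixed test function $\phi\in G_0^\sigma$ and rerun Propositions \ref{p:p2} and \ref{p:p2mom} with the kernel bounds $|\del_\xi^n p|\le C^{n+1}n!^\sigma$; your filled-in details (the binomial-ratio trick $\binom{n}{i}/\binom{k-j+n}{i}^\sigma\le 1$ preserving the geometric sum, and the bookkeeping $(k-j)!^\sigma j!^\sigma/(k-1)!\le k\cdot k!^{s}$ yielding the $k!^s$ factor) are consistent with the paper's induction.
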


The most notable difference in the arguments is that $\mathcal{M}_{\eps,\gamma}=\mathcal{M}_{\phi,\eps,\gamma}$ can now be defined with a test function $\phi\in G_0^\sigma(\R)$, instead of using a sequence. The proof is then more or less identical with the proof of Proposition \ref{p:p2mom}, but derivatives need to be estimated using $|\del^mp|\leq C^{m+1}m!^\sigma$ etc.\ for some $\sigma>1$. 

Next we will use the Legendre polynomials over $|x|\leq1$ together with Parseval's identity, as in section \ref{s:rt_est}, to finish the proof of a stability estimate in the case of the weighted Radon transform. In the proof we put emphasis on the few details that need to be changed.

\begin{theorem}\label{mainest_wt}
If in addition to the assumptions in Proposition \ref{prop:qest} we assume that $0<\eps<1$, $\gamma>0$ small enough and $\|f\|_{C^{0,\alpha}(\R^2)}\leq C_0$, then
\[
\|\mathcal{M}_{\eps,\gamma}[fm_\gamma]\|_2\leq4M\left(\frac{\log(C/\eps)\log\log(M/\|R_m[f]\|_{\eps,\gamma})}{\log(M/\|R_m[f]\|_{\eps,\gamma})}\right)^\alpha,
\]
for small $\|R_m[f]\|_{\eps,\gamma}$. $M>0$ depends on $C_0$ and $C$ depends on $C_0$ and $s=\sigma-1$.
\end{theorem}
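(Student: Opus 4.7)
The plan is to follow the structure of the proof of Theorem \ref{main2} almost verbatim, with Proposition \ref{prop:qest} replacing Proposition \ref{p:mom} as the moment input. As before, I would expand $\mathcal{M}_{\eps,\gamma}[fm_\gamma]$ in a Fourier--Legendre series on $[-1,1]$ with coefficients $a_n$, truncate at index $N$, and use the triangle inequality
\[
\|\mathcal{M}_{\eps,\gamma}[fm_\gamma]\|_2 \leq \Bigl(\sum_{n=0}^N |a_n|^2\Bigr)^{1/2} + \bigl\|\mathcal{M}_{\eps,\gamma}[fm_\gamma] - S_N[\mathcal{M}_{\eps,\gamma}[fm_\gamma]]\bigr\|_2.
\]
The tail is controlled by the Hölder regularity of $\mathcal{M}_{\eps,\gamma}[fm_\gamma]$ supplied by Theorem \ref{t:preg} together with Lemma \ref{l:tailest}, producing a bound of the form $M/N^\alpha$ with $M$ depending on $C_0$.

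For the finite partial sum I would combine Lemma \ref{l:legmom} with the Gevrey moment estimate of Proposition \ref{prop:qest} to obtain
\[
|a_n| \leq (4\sqrt{2})^n \max_{0 \leq k \leq n} \Bigl(\frac{C}{\eps}\Bigr)^{k+1} (k!)^s H, \qquad H := \|R_m[f]\|_{\eps,\gamma}.
\]
Since $(k!)^s$ is increasing, the maximum is attained at $k=n$, and after absorbing the $(4\sqrt{2})^n$ and a $\sqrt{N+1}$ factor into a slightly larger constant one arrives at
\[
\Bigl(\sum_{n=0}^N |a_n|^2\Bigr)^{1/2} \leq \Bigl(\frac{C}{\eps}\Bigr)^{N+1} (N!)^s H.
\]

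Now one chooses $N$ to be the largest integer for which the right-hand side does not exceed $M/N^\alpha$. Taking logarithms and applying Stirling's bound $\log N! \leq N \log N$, the constraint reduces to
\[
sN\log N + (N+1)\log(C/\eps) + \alpha \log N \leq \log(M/H).
\]
Writing $L := \log(M/H)$, the effective requirement is $\max\bigl(sN\log N,\, N\log(C/\eps)\bigr) \lesssim L$, so one can take $N \asymp L/\bigl(s\log L + \log(C/\eps)\bigr)$. Since under our hypotheses both $\log(C/\eps) \geq 1$ and $\log\log(M/H) \geq 1$, one has $s\log L + \log(C/\eps) \lesssim \log(C/\eps)\,\log\log(M/H)$, which gives
\[
N^\alpha \gtrsim \biggl(\frac{\log(M/H)}{\log(C/\eps)\,\log\log(M/H)}\biggr)^\alpha.
\]
Inserting this choice into the two-term bound for $\|\mathcal{M}_{\eps,\gamma}[fm_\gamma]\|_2$ and cleaning up constants exactly as at the end of the proof of Theorem \ref{main2} yields the asserted estimate.

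The main obstacle relative to the real-analytic case is precisely this balancing: the $(N!)^s$ factor in the coefficient bound is much larger than the $e^N$ one meets in Theorem \ref{main2}, so the dominant term in the log-inequality is $sN\log N$ rather than $N\log(C/\eps)$. This forces $N$ to grow only like $L/\log L$ in place of $L$, and that is exactly what produces the additional $\log\log$ factor in the conclusion. The case $H = 0$ and the handling of the $k = 0$ moment are absorbed into the definition of $\|\cdot\|_{\eps,\gamma}$ exactly as in Remark~\ref{p:mom0}, so no new difficulty appears there.
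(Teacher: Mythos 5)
Your proposal is correct and follows essentially the same route as the paper: Legendre expansion with the tail controlled via Theorem \ref{t:preg} and Lemma \ref{l:tailest}, coefficient bounds from Lemma \ref{l:legmom} plus Proposition \ref{prop:qest} giving the $(N!)^s$ factor, and then the choice $N\asymp y/\log y$ with $y=\log(M/H)/\log(C(s)/\eps)$, which is precisely where the extra $\log\log$ enters. The only (cosmetic) difference is that the paper absorbs the $s$-dependent constants into $C(s)$ inside the logarithm so as to keep the explicit prefactor $4M$, whereas your $\lesssim$-bookkeeping would require the same absorption at the end.
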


\begin{proof}
Proposition \ref{prop:qest} together with Lemma \ref{l:legmom} implies with $H=\|R_m[f]\|_{\eps,\gamma}$,
\[
|a_n|\leq\left(\frac{C}{\eps}\right)^{k+1}n!^sH
\]
which in turn gives
\[
\left(\sum_{n=0}^{N}|a_n|^2\right)^{1/2}\leq\left(\frac{C}{\eps}\right)^{N+1}N^{sN}H\leq\frac{M}{N^\alpha}.
\]
The last inequality is equivalent with
\[
N\log N\left(s+\frac{\alpha}{N}+\left(1+\frac1N\right)\frac{\log(C/\eps)}{\log N}\right)\leq\log\frac{M}{H}
\]
which holds if
\[
N\log N\leq\frac{\log(M/H)}{\log(C(s)/\eps)}=y,
\]
where $C(s)$ depends only on $s$. In turn the above inequality holds if we choose $N$ as the largest integer such that
\[
N\leq\frac{y}{\log y},
\]
then
\[
N\geq\frac{y}{\log y}-1.
\]
Choosing $N$ in this way and taking Lemma \ref{l:tailest} into account we have proven
\[
\|\mathcal{M}_{\eps,\gamma}[fm_\gamma]\|_2\leq\frac{2M}{N^\alpha}\leq2M\left(\frac{\log y}{y-\log y}\right)^\alpha\leq4M\left(\frac{\log y}{y}\right)^\alpha.
\]
Hence
\begin{multline*}
\|\mathcal{M}_{\eps,\gamma}[fm_\gamma]\|_2\leq4M\left(\frac{\log(C(s)/\eps)\log\left(\frac{\log(M/H)}{\log(C(s)/\eps)}\right)}{\log(M/H)}\right)^\alpha\\
\leq4M\left(\frac{\log(C(s)/\eps)\log\log(M/H)}{\log(M/H)}\right)^\alpha.
\end{multline*}
\end{proof}
Using similar arguments as in the proof of Theorem \ref{f_stability} we can also prove the next theorem.
\begin{theorem}\label{fm_stability}
Under the assumptions in Theorem \ref{mainest_wt} it holds that
\[
\|f(\cdot,\gamma)m_\gamma(\cdot,0,\gamma)\|_{2}\leq M\frac{\log^{2\alpha}\log(\|R_m[f]\|_{\eps,\gamma}^{-1})}{\log^\alpha(\|R_m[f]\|_{\eps,\gamma}^{-1})}
\]
for small $\|R_m[f]\|_{\eps,\gamma}$ and $M$ depends on $C_0$ and $s=\sigma-1$.
\end{theorem}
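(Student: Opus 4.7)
The plan is to run exactly the argument of Theorem \ref{f_stability}, with Theorem \ref{mainest_wt} playing the role of Theorem \ref{main2}. First I would write the triangle inequality
\[
\|f(\cdot,\gamma)m(\cdot,0,\gamma)\|_2 \leq \|(fm_\gamma)(\cdot,\gamma) - \mathcal{M}_{\eps,\gamma}[fm_\gamma]\|_2 + \|\mathcal{M}_{\eps,\gamma}[fm_\gamma]\|_2
\]
for an auxiliary parameter $0<\eps<\eps_0$. The first term is bounded by the weighted version of Proposition \ref{prop:pconv}, giving a contribution of size $\lesssim C_0\eps^\alpha$; here the constant absorbs the Hölder norm of $f$ and the local smoothness of $m$.

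Next, writing $H_\eps = \|R_m[f]\|_{\eps,\gamma}\leq\|R_m[f]\|_{\eps_0,\gamma} =: H$ (monotonicity of this norm in $\eps$), Theorem \ref{mainest_wt} yields
\[
\|\mathcal{M}_{\eps,\gamma}[fm_\gamma]\|_2 \leq 4M\left(\frac{(\log C + \log(1/\eps))\log\log(M/H)}{\log(M/H)}\right)^\alpha,
\]
so that the combined bound
\[
\|f(\cdot,\gamma)m(\cdot,0,\gamma)\|_2 \lesssim \eps^\alpha + M\left(\frac{(\log C + \log(1/\eps))\log\log(M/H)}{\log(M/H)}\right)^\alpha
\]
holds uniformly in $\eps<\eps_0$ whenever $H$ is small enough.

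Finally I would balance the two terms by choosing $\eps \asymp 1/\log(M/H)$. With this choice the first term becomes $\lesssim (\log(M/H))^{-\alpha}$, which is dominated by the target bound since $\log\log(M/H)\geq 1$ for $H$ sufficiently small. For the second term, $\log(1/\eps) \asymp \log\log(M/H)$, so the numerator is $\asymp \log^2\log(M/H)$, producing exactly the right-hand side of the theorem. The degenerate case $H = 0$ is handled as in Theorem \ref{f_stability} by sending $\eps \to 0$ and invoking Proposition \ref{prop:pconv} alone.

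I do not expect a substantive obstacle: the only subtlety is that the extra $\log\log$ factor already present in Theorem \ref{mainest_wt} compounds with the $\log(1/\eps)$ produced at the balancing step, and this is what is responsible for the exponent $2\alpha$ on $\log\log$ in the final estimate. A minor preliminary check is that $fm_\gamma$ inherits a Hölder-type bound in $y$, with constant depending only on $C_0$ and on the $C^1$-size of $m$ near $(x,0,\gamma)$, so that Proposition \ref{prop:pconv} applies to it; this is immediate from the product rule for Hölder functions.
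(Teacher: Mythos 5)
Your proposal is correct and follows essentially the same route as the paper: the triangle inequality splitting via $\mathcal{M}_{\eps,\gamma}[fm_\gamma]$, the bound $C_0\eps^\alpha$ from the weighted version of Proposition \ref{prop:pconv}, the monotonicity $H_\eps\leq H$, and the balancing choice $\eps\asymp 1/\log(M/H)$ so that $\log(1/\eps)\asymp\log\log(M/H)$ compounds with the $\log\log$ already present in Theorem \ref{mainest_wt} to give the exponent $2\alpha$. This is exactly the paper's argument (which itself mirrors the proof of Theorem \ref{f_stability}), so no further comment is needed.
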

\begin{proof}
Choosing $\eps<\eps_0$ implies $H_\eps=\|R_m[f]\|_{\eps,\gamma}\leq\|R_m[f]\|_{\eps_0,\gamma}=H$ so if 
\[
\frac1{\log(M/H)}\leq\eps\leq\frac2{\log(M/H)},
\]
we get
\begin{multline*}
\|f(\cdot,\gamma)\|_2\leq4M\left(\frac{(\log C(s)+\log(1/\eps))\log\log(M/H_\eps)}{\log(M/H_\eps)}\right)^\alpha+M\eps^\alpha\\
\leq4M\left(\frac{(\log C(s)+\log\log(M/H))\log\log(M/H)}{\log(M/H)}\right)^\alpha+2M\frac1{\log^\alpha(M/H)}\\
\lesssim M\left(\frac{\log^2\log(\|R_m[f]\|_{\eps_0,\gamma}^{-1})}{\log(\|R_m[f]\|_{\eps_0,\gamma}^{-1})}\right)^\alpha
\end{multline*}
\end{proof}
We can also get a version of Theorem \ref{f_stability_var}, using similar arguments, for $a,b\in G^\sigma(\R^2)$.
\begin{remark}
In the special case when $a$ and $b$ are as in Remark \ref{rmk:comm}, i.e.\ if $D_a$ and $D_b$ commute, it is very easy to prove a similar result as Proposition \ref{prop:qest} where only the Propositions \ref{prop:Gev2} and \ref{prop:Gev1} are needed.
\end{remark}

\noindent\emph{The authors would like to thank Institut Mittag-Leffler for providing an excellent working environment during the time when this research was conducted. We would also like to thank Professor Jan-Olov Str\"omberg and Professor Mikko Salo for valuable discussions regarding this work.}

\section{Appendix}
\label{sec:app}

\subsection{A special sequence of test functions}
\label{s:horseq}
In order to improve our continuity estimate for analytic weights we need to choose a test function out of a particular sequence. The simple construction of this sequence can be found in H\"ormander's book \cite{LH1}, or in Rodino's book \cite{Rodino} (Proposition 1.4.10).
\begin{proposition}\label{LHseq}
For any given neighborhood $V$ of $x_0\in\R^n$ we can find a sequence $\varphi_N\in C_0^\infty(V)$ such that $0\leq\varphi_N$ and
\begin{equation}\label{LHseqest}
|\del^\alpha\varphi_N|\leq C^{|\alpha|+1}N^{|\alpha|},\quad|\alpha|\leq N,
\end{equation}
where $C>0$ is a constant not depending on $N$ or $\alpha$.
\end{proposition}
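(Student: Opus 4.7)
The plan is to produce $\varphi_N$ as the convolution of a bounded non-negative $L^\infty$-function with an $N$-fold convolution of a mollifier scaled to have radius of order $1/N$. Concretely, I would fix a compact set $K \subset V$ with $\mathrm{dist}(K, \partial V) = d > 0$, let $\chi$ be its characteristic function (so $0 \le \chi \le 1$), pick once and for all a fixed mollifier $\psi_0 \in C_0^\infty(\R^n)$ supported in the unit ball with $\psi_0 \ge 0$ and $\int \psi_0 = 1$, and set $\psi(x) = \delta^{-n}\psi_0(x/\delta)$ with $\delta = d/(2N)$. Define
\[
\varphi_N \;=\; \chi * \underbrace{\psi * \psi * \cdots * \psi}_{N \text{ factors}}.
\]
The convolution $\psi^{*N}$ lies in $C_0^\infty$, so $\varphi_N \in C_0^\infty$; non-negativity is immediate from $\chi, \psi \ge 0$; and the choice $N\delta = d/2$ guarantees $\supp \varphi_N \subset K + B_{d/2} \subset V$.

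For the derivative bound, the key observation is that when $|\alpha| \le N$ we can distribute $\alpha$ across the $N$ factors as $\alpha = \alpha_1 + \cdots + \alpha_N$ with $|\alpha_j| \le 1$ for every $j$. Since derivatives pass freely through convolutions,
\[
\del^\alpha \varphi_N \;=\; \chi * (\del^{\alpha_1}\psi) * \cdots * (\del^{\alpha_N}\psi),
\]
and Young's inequality gives
\[
\|\del^\alpha \varphi_N\|_\infty \;\le\; \|\chi\|_\infty \prod_{j=1}^N \|\del^{\alpha_j}\psi\|_1 \;=\; \prod_{j:\,|\alpha_j|=1} \|\del^{\alpha_j}\psi\|_1,
\]
using $\|\psi\|_1 = 1$ for the factors that remain undifferentiated. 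A first-order derivative of $\psi$ scales as $\|\del^{\alpha_j}\psi\|_1 = \delta^{-1}\|\del^{\alpha_j}\psi_0\|_1 \le C_1/\delta = 2C_1 N/d$, where $C_1$ depends only on the fixed $\psi_0$. Multiplying the $|\alpha|$ such factors yields $\|\del^\alpha \varphi_N\|_\infty \le (2C_1/d)^{|\alpha|} N^{|\alpha|}$, which is of the required form $C^{|\alpha|+1} N^{|\alpha|}$ after absorbing into a constant $C = \max(1, 2C_1/d)$; the case $\alpha = 0$ is handled by $\|\varphi_N\|_\infty \le \|\chi\|_\infty = 1 \le C$.

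There is no real obstacle here, only one structural point worth flagging: the whole estimate rests on placing \emph{at most one} derivative on each factor, which is precisely why the hypothesis $|\alpha| \le N$ enters, and on choosing the mollifier scale $\delta \asymp 1/N$ so that the $N$ successive widenings of the support only add up to a fixed amount $N\delta = d/2$. These two quantitative balances (one derivative per factor, $1/N$ per factor in support) are exactly what converts the naive bound $(1/\delta)^{|\alpha|} = (N/\mathrm{const})^{|\alpha|}$ into the asserted estimate, and they are the only places any thought is required.
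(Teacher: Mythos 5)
Your proof is correct and is precisely the construction the paper points to (the paper gives no proof of its own but cites H\"ormander and Rodino, where the argument is exactly this: convolve a cutoff with $N$ copies of a mollifier at scale $\asymp 1/N$ and place at most one derivative on each factor). The only cosmetic remark is that $K$ should be taken to be a closed ball about $x_0$ so that $\varphi_N\equiv1$ near $x_0$ --- otherwise the statement is vacuously satisfied by $\varphi_N\equiv0$ --- and your construction delivers this automatically since $\chi*\psi^{*N}=1$ at any point whose $d/2$-neighborhood lies in $K$.
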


\subsection{The Gevrey classes $G^{\sigma}$}
\label{sec:Gevrey}
In this section we recall some basic properties of the classes $G^\sigma$. Most of these results can be found in Rodino's book, \cite{Rodino} on Gevrey spaces.
\begin{definition}
\label{def:Gevrey}
Let $\Omega\subset\R^n$ and $\sigma\geq1$. If $f\in C^\infty(\Omega)$ and for every compact subset $K$ of $\Omega$ there exists a $C>0$ such that for all multi-indices $\alpha$ and all $\bx\in K$
\begin{equation}
|\del^\alpha f(\bx)|\leq C^{|\alpha|+1}(\alpha!)^\sigma,\label{eq:Gev1}
\end{equation}
then we call $f$ a \emph{Gevrey function of order $\sigma$}. We also write $f\in G^\sigma(\Omega)$.
\end{definition}
\begin{remark}
In place of estimates \eqref{eq:Gev1} one sometimes use the equivalent
\begin{equation}
|\del^\alpha f(\bx)|\leq C^{|\alpha|+1}|\alpha|^{\sigma|\alpha|}.\label{eq:Gev2}
\end{equation}
Also observe that we may assume $f\geq0$ which is illustrated by the following example of a $G_0^\sigma(\R)$-function from \cite{Ramis}:
\[
\phi(x)=\begin{cases}
\exp\left(-\frac{1}{((1-t)t)^{\frac1{\sigma-1}}}\right)&,t\in(0,1)\\
0&,t\notin(0,1).
\end{cases}
\]
\end{remark}
\begin{remark}
Observe that $G^1(\Omega)$ is the set of all analytic functions on $\Omega$.
\end{remark}
\begin{definition}
For $\sigma>1$ we define $G_0^\sigma(\Omega)$ to be the set of all $f\in G^\sigma(\Omega)$ such that $f$ has compact support.
\end{definition}
The proof of the following propositions can be found in Rodino's book.
\begin{proposition}
\label{prop:Gev1}
$G^\sigma(\Omega)$ is a vector space and a ring (with respect to multiplication of functions) and is closed under differentiation. 
\end{proposition}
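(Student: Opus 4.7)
The plan is to verify the three closure properties separately, each via a direct estimate from Definition \ref{def:Gevrey} applied on an arbitrary compact $K\subset\Omega$. Throughout, for $f,g\in G^\sigma(\Omega)$ I will denote by $C_f,C_g$ the constants from \eqref{eq:Gev1} associated with $K$, and set $C=\max(C_f,C_g)$.

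\emph{Vector space.} For $\lambda,\mu\in\C$, linearity of $\partial^\alpha$ gives
\[
|\partial^\alpha(\lambda f+\mu g)(\bx)|\leq|\lambda|C_f^{|\alpha|+1}(\alpha!)^\sigma+|\mu|C_g^{|\alpha|+1}(\alpha!)^\sigma\leq(|\lambda|+|\mu|)(C+1)C^{|\alpha|+1}(\alpha!)^\sigma,
\]
so with a slightly enlarged constant $\lambda f+\mu g\in G^\sigma(\Omega)$.

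\emph{Ring.} The point is the Leibniz formula
\[
\partial^\alpha(fg)=\sum_{\beta\leq\alpha}\binom{\alpha}{\beta}\partial^\beta f\,\partial^{\alpha-\beta}g.
\]
The key combinatorial inequality, valid for $\sigma\geq1$, is
\[
\binom{\alpha}{\beta}(\beta!)^\sigma((\alpha-\beta)!)^\sigma=\alpha!\,(\beta!(\alpha-\beta)!)^{\sigma-1}\leq\alpha!\,(\alpha!)^{\sigma-1}=(\alpha!)^\sigma,
\]
using $\beta!(\alpha-\beta)!\leq\alpha!$ for multi-indices. Combined with $\sum_{\beta\leq\alpha}1=\prod_i(\alpha_i+1)\leq 2^{|\alpha|}$ this yields
\[
|\partial^\alpha(fg)(\bx)|\leq(\alpha!)^\sigma\sum_{\beta\leq\alpha}C^{|\beta|+1}C^{|\alpha-\beta|+1}\leq C^2 2^{|\alpha|}C^{|\alpha|}(\alpha!)^\sigma\leq(C')^{|\alpha|+1}(\alpha!)^\sigma
\]
with $C'=2C^2+1$, so $fg\in G^\sigma(\Omega)$.

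\emph{Closure under differentiation.} For $f\in G^\sigma(\Omega)$ and a coordinate $x_j$,
\[
|\partial^\alpha(\partial_j f)(\bx)|=|\partial^{\alpha+e_j}f(\bx)|\leq C_f^{|\alpha|+2}((\alpha+e_j)!)^\sigma=C_f^{|\alpha|+2}(\alpha_j+1)^\sigma(\alpha!)^\sigma.
\]
Since $(\alpha_j+1)^\sigma\leq(|\alpha|+1)^\sigma$ grows only polynomially, we can absorb it into an exponential: for any $M>1$ there is a constant $C_{\sigma,M}$ with $(|\alpha|+1)^\sigma\leq C_{\sigma,M}M^{|\alpha|}$ for every $\alpha$. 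Taking, e.g., $M=2$ and enlarging the constant we obtain a bound of the form $(C'')^{|\alpha|+1}(\alpha!)^\sigma$, which is \eqref{eq:Gev1} for $\partial_j f$.

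No single step is really hard here; the only point demanding care is the ring property, because one has to notice the identity $\binom{\alpha}{\beta}(\beta!)^\sigma((\alpha-\beta)!)^\sigma=\alpha!\,(\beta!(\alpha-\beta)!)^{\sigma-1}$ and exploit that $\sigma\geq1$ so that the exponent on $\beta!(\alpha-\beta)!$ is non-negative and the bound $\beta!(\alpha-\beta)!\leq\alpha!$ gives the desired factor $(\alpha!)^\sigma$. Everything else is absorbing polynomial prefactors and counting terms in the Leibniz sum, which is routine.
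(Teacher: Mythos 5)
Your proof is correct: the Leibniz estimate with the identity $\binom{\alpha}{\beta}(\beta!)^\sigma((\alpha-\beta)!)^\sigma=\alpha!\,(\beta!(\alpha-\beta)!)^{\sigma-1}\leq(\alpha!)^\sigma$ (valid since $\sigma\geq1$ and $\beta!(\alpha-\beta)!\leq\alpha!$ componentwise) is exactly the standard argument for stability of Gevrey classes under products, and the vector-space and differentiation parts are routine absorptions of constants. The paper itself gives no proof, deferring to Rodino's book, so there is nothing to compare against; your direct verification is the expected one.
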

In order to treat compositions we first need the following definition.
\begin{definition}
\label{def:Gmap}
Suppose that $\chi:\Omega\to\Lambda$, where $\Lambda$ is an open subset of $\R^m$. We say that $\chi$ is an \emph{$G^\sigma$-map} if $\chi=(\chi_1,\dots,\chi_m)$ and $\chi_j\in G^\sigma(\Omega)$ for all $j$.
\end{definition}
\begin{proposition}
\label{prop:Gev2}
Suppose that $\chi:\Omega\to\Lambda$ is a $G^\sigma$-map and $f\in G^\sigma(\Lambda)$, then $f\circ\chi\in G^\sigma(\Omega)$.
\end{proposition}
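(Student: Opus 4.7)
The plan is to verify the Gevrey bound $|\partial^\alpha(f\circ\chi)(x)|\leq C^{|\alpha|+1}(\alpha!)^\sigma$ on compact subsets of $\Omega$ by applying the multivariate Faà di Bruno formula to $\partial^\alpha(f\circ\chi)$ and then collapsing the resulting combinatorial sum, using the hypothesis $\sigma\geq 1$ to absorb products of factorials.

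First I would fix a compact $K\subset\Omega$. Since $\chi$ is continuous, $\chi(K)$ is a compact subset of $\Lambda$, so Definition \ref{def:Gevrey} provides constants $C_f,C_\chi>0$ with $|\partial^\beta f(y)|\leq C_f^{|\beta|+1}(\beta!)^\sigma$ for all $y\in\chi(K)$ and all $\beta\in\N^m$, and $|\partial^\gamma\chi_j(x)|\leq C_\chi^{|\gamma|+1}(\gamma!)^\sigma$ for all $x\in K$, $1\leq j\leq m$ and $\gamma\in\N^n$. Set $C_0=\max(C_f,C_\chi)$.

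Next I would expand $\partial^\alpha(f\circ\chi)$ via the multivariate Faà di Bruno formula, which in a convenient form reads
\[ \partial^\alpha(f\circ\chi)(x)=\sum_{k=1}^{|\alpha|}\sum_{\substack{p_1+\cdots+p_k=\alpha\\|p_i|\geq 1}}\sum_{J\in\{1,\dots,m\}^k}\frac{\alpha!}{k!\,\prod_i p_i!}\,(\partial^J f)(\chi(x))\prod_{i=1}^{k}\partial^{p_i}\chi_{J_i}(x). \]
Substituting the Gevrey bounds on $f$ and the components $\chi_j$ yields
\[ |\partial^\alpha(f\circ\chi)(x)|\leq \sum_{k=1}^{|\alpha|}(mC_0)^{k}\,C_0^{k+1}(k!)^\sigma\sum_{\substack{p_1+\cdots+p_k=\alpha\\|p_i|\geq 1}}\frac{\alpha!}{k!\,\prod_i p_i!}\,C_0^{|\alpha|}\prod_{i=1}^{k}(p_i!)^\sigma. \]

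The combinatorial core is to show this is at most $C^{|\alpha|+1}(\alpha!)^\sigma$. The elementary facts that make the collapse work are: (i) $\prod_i p_i!\leq \alpha!$ and $\prod_i |p_i|!\leq |\alpha|!$, so since $\sigma\geq 1$ one can extract a factor $(\alpha!)^{\sigma-1}$ from $\prod_i(p_i!)^\sigma$; (ii) the identity $\sum_{p_1+\cdots+p_k=\alpha}\alpha!/\prod_i p_i! = k^{|\alpha|}$, which controls the remaining multinomial sum; and (iii) the crude bound $(k!)^\sigma k^{|\alpha|}\leq C_1^{|\alpha|+k}\alpha!$ when $k\leq |\alpha|$, which lets one absorb all the $k$-dependent factors into a geometric series in $k$. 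Combining these steps and choosing $C$ large in terms of $C_0,m,n,\sigma$ gives the desired bound.

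The main obstacle is precisely this combinatorial collapse: one must keep the full factor $(\alpha!)^\sigma$ and not degrade it to $(|\alpha|!)^\sigma$ or lose the $\sigma$-power into the inner sum. A clean alternative, which I would likely use in the actual write-up, is to reduce first to the one-dimensional case by viewing $\partial^\alpha$ as an iterated directional derivative in the standard basis and applying the classical one-variable Faà di Bruno formula with Bell polynomials, for which the sharp estimate $B_{N,k}\!\bigl(1!^\sigma,2!^\sigma,\ldots\bigr)\leq (N!)^\sigma C^N/k!$ is known; induction on the order of differentiation and on $n,m$ then lifts the estimate to arbitrary multi-indices, avoiding the bookkeeping over the index set $J$.
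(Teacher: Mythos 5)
First, a point of reference: the paper does not prove this proposition at all — it is quoted with citations to Rodino (analytic $\chi$) and Lions--Magenes (general Gevrey $\chi$) — so your attempt is being measured against the standard literature argument. Your overall strategy (multivariate Fa\`a di Bruno plus a combinatorial collapse) is indeed that standard route, and your facts (i) and (ii) are correct, but the outline does not close and fact (iii) is false. Take $n=1$, $\alpha=N$, $k=N$: the claimed bound $(k!)^{\sigma}k^{|\alpha|}\le C_1^{|\alpha|+k}\alpha!$ becomes $(N!)^{\sigma}N^{N}\le C_1^{2N}N!$, which fails for every $\sigma\ge1$ since $N^{N}\ge N!$. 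More importantly, the real difficulty is a factor your scheme never controls: the derivative of $f$ contributes $(k!)^{\sigma}$ while the Fa\`a di Bruno coefficient supplies only $1/k!$, leaving $(k!)^{\sigma-1}$ in the $k$-th term. Your step (i) bounds $\prod_i(p_i!)^{\sigma-1}\le(\alpha!)^{\sigma-1}$ uniformly in $k$, which discards exactly the decay that must cancel this $(k!)^{\sigma-1}$; after that no estimate of type (iii) can rescue the sum when $\sigma>1$ — and $\sigma>1$ is precisely the case the paper needs in Section \ref{s:gevwts}. (Incidentally, your stated worry about degrading $\alpha!$ to $|\alpha|!$ is a non-issue: in fixed dimension $\alpha!\le|\alpha|!\le n^{|\alpha|}\alpha!$, so they differ only by an admissible $C^{|\alpha|}$.)

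The missing ingredient is the interplay between the number of blocks and their sizes: since all $|p_i|\ge1$ and $\sum_i|p_i|=N=|\alpha|$, one has $\prod_{i=1}^{k}|p_i|!\le(N-k+1)!$, and combined with $k!\,(N-k+1)!\le(N+1)!$ this gives $(k!)^{\sigma-1}\prod_i(p_i!)^{\sigma-1}\le((N+1)!)^{\sigma-1}\le C^{N}(\alpha!)^{\sigma-1}$, which is what kills the leftover factorial. The number of decompositions should then be counted by $\binom{N-1}{k-1}$-type bounds, i.e.\ by $C^{N+k}$, rather than by $k^{|\alpha|}$ from (ii), which is already too lossy even at $\sigma=1$ (it costs an extra $N^{N}\approx e^{N}N!$). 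The same defect infects your proposed alternative: the Bell-polynomial estimate you quote, $B_{N,k}\bigl(1!^{\sigma},2!^{\sigma},\ldots\bigr)\le C^{N}(N!)^{\sigma}/k!$, is true but insufficient for the same reason; what is needed, and what the inequality $\prod_i n_i!\le(N-k+1)!$ delivers, is $B_{N,k}\bigl(1!^{\sigma},2!^{\sigma},\ldots\bigr)\le C^{N}(N!)^{\sigma}/(k!)^{\sigma}$. With that correction either of your two routes goes through.
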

The proposition is proved for analytic maps $\chi$ in \cite{Rodino}, but the more general proposition is proved in \cite{LionsMag}.

\subsection{Legendre polynomials}
\label{sec:LP}
Recall that the Legendre polynomials $\{P_n\}_{n=0}^\infty$ forms a complete orthogonal system over $L^2([-1,1])$. There are many equivalent definitions, for example:
\begin{equation}
P_n(x)=\frac1{2^n}\sum_{k=0}^{[n/2]}(-1)^k\binom{n}{k}\binom{2n-2k}{n}x^{n-2k},\label{eq:Legendre}
\end{equation}
where $[\cdot]$ denotes the round to closest integer function. Some properties of $P_n$ include:
\begin{itemize}
\item For even/odd $n, P_n(x)$ is an even/odd polynomial in $x$,
\item $\deg P_n=n$,
\item $|P_n(x)|\leq1$,
\item $\|P_n\|_2^2=\frac{2}{2n+1}$, where $\|P\|_2^2=\langle P,P\rangle=\int_{-1}^1 P(x)\overline{P(x)}\d x$.
\end{itemize}
Given any $f\in L^2([-1,1])$ we know that
\[
f(x)=\sum_{n=0}^\infty\frac{\langle f,P_n\rangle}{\|P_n\|_2^2} P_n(x),
\]
where the convergence is in $L^2([-1,1])$-sense. Denote from now on the (Fourier-) Legendre coefficients by
\[
a_n=\frac{\langle f,P_n\rangle}{\|P_n\|_2}=\sqrt{\frac{2n+1}{2}}\int_{-1}^1 f(x)P_n(x)\d x.
\]
Parseval's identity reads in this case,
\[
\|f\|_2^2=\sum_{n=0}^\infty|a_n|^2.
\]
The following lemmas were used in the proofs of the stability estimates.
\begin{lemma}\label{l:legmom}
\[
|a_n|\leq(4\sqrt2)^n\max_{0\leq k\leq n}|m_k(f)|.
\]
where
\[
m_k(f)=\int_{-1}^1x^kf(x)\d x,\quad k=0,1,\dots,n
\]
are moments $f$.
\end{lemma}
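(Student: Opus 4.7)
The plan is to substitute the explicit formula \eqref{eq:Legendre} for $P_n$ directly into the definition of $a_n$, which turns $a_n$ into a finite linear combination of the moments $m_j(f)$ with $j \le n$, and then estimate the absolute values of the coefficients. More precisely, after substitution
\[
a_n = \sqrt{\tfrac{2n+1}{2}}\cdot\frac{1}{2^n}\sum_{k=0}^{[n/2]}(-1)^k\binom{n}{k}\binom{2n-2k}{n}\,m_{n-2k}(f),
\]
so by the triangle inequality
\[
|a_n|\le \sqrt{\tfrac{2n+1}{2}}\cdot\frac{S_n}{2^n}\max_{0\le j\le n}|m_j(f)|,\qquad S_n:=\sum_{k=0}^{[n/2]}\binom{n}{k}\binom{2n-2k}{n}.
\]

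Next I would estimate $S_n$ by elementary means. Using the trivial bound $\binom{2n-2k}{n}\le 2^{2n-2k}=4^{n-k}$ valid for $0\le k\le[n/2]$, one gets
\[
S_n\le\sum_{k=0}^{[n/2]}\binom{n}{k}4^{n-k}\le 4^n\sum_{k=0}^{n}\binom{n}{k}(1/4)^k=4^n(5/4)^n=5^n,
\]
so $S_n/2^n\le (5/2)^n$.

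It remains to absorb the harmless factor $\sqrt{(2n+1)/2}$ into the exponential. Since $\sqrt{(2n+1)/2}$ grows only like $\sqrt{n}$ while $(8\sqrt{2}/5)^n\approx 2.26^n$ grows exponentially, a direct check at $n=0,1$ and monotonicity shows $\sqrt{(2n+1)/2}\le (8\sqrt{2}/5)^n$ for every $n\ge 0$. Multiplying, $(5/2)^n\cdot(8\sqrt{2}/5)^n=(4\sqrt{2})^n$, which gives the claimed bound.

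No real obstacle is foreseen; the only point worth watching is to keep track of the constants so that the loose estimates for the binomial sum and for the normalization factor $\sqrt{(2n+1)/2}$ together still fit inside $(4\sqrt{2})^n$. If one wanted a sharper constant, one could note that $\binom{2n-2k}{n}$ vanishes for $2n-2k<n$ and use tighter central-binomial estimates, but this is not needed for the stated lemma.
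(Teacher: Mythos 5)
Your proof is correct and follows essentially the same route as the paper: substitute the explicit formula \eqref{eq:Legendre} for $P_n$ into the definition of $a_n$, apply the triangle inequality, and bound the resulting coefficient sum by elementary binomial estimates. The only difference is bookkeeping — you bound $\binom{2n-2k}{n}\le 4^{n-k}$ and sum the geometric-weighted binomial series to get $S_n\le 5^n$, whereas the paper uses the cruder $\binom{2n-2k}{n}\le 4^n$ together with $\sum_{k\le [n/2]}\binom{n}{k}\le 2^n$ and $\sqrt{(2n+1)/2}\le 2^{n/2}$; both land on the same constant $(4\sqrt2)^n$.
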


\begin{proof}
\begin{multline*}
|a_n|=\left|\int_{-1}^1 f(x)\sqrt{\frac{2n+1}{2}}\frac1{2^n}\sum_{k=0}^{[n/2]}\binom{n}{k}\binom{2n-2k}{n}x^{n-2k}\d x\right|\\
\leq\sqrt{\frac{2n+1}{2}}\frac1{2^n}\sum_{k=0}^{[n/2]}\binom{n}{k}\binom{2n-2k}{n}\left|\int_{-1}^1x^{n-2k}f(x)\d x\right|\\
\leq2^{n/2}\cdot2^n\max_{0\leq k\leq n}|m_k(f)|\sum_{k=0}^{[n/2]}\binom{n}{k}\\
\leq(4\sqrt2)^n\max_{0\leq k\leq n}|m_k(f)|.
\end{multline*}
The first inequality is just the triangle inequality, the second inequality follows from that $\binom{2n-2k}{n}\leq\binom{2n}{n}\leq4^n$ and $\sqrt{\frac{2n+1}{2}}<2^{n/2}$. The final inequality follows by increasing the number of terms (from $[n/2]$ to $n$) in the sum and applying the binomial theorem.
\end{proof}
There are many results on convergence and the magnitude of $a_n$ depending on the regularity of $f$. Good references are Sansone's and Alexits' books \cite{Sansone,Alexits}. Recall that the modulus of continuity of a function $f$ on $\R^n$ is defined by the quantity
\[
\omega(f;r)=\sup_{\|\bx-\by\|\leq r}|f(\bx)-f(\by)|.
\]
In particular, $f\in C^{0,\alpha}(\R)$ implies that there are constant $0<\alpha\leq1,C_0>0$ such that
\[
\omega(f;r)\leq C_0 r^\alpha.
\]
As the best $L^2$-approximation of a function $f$ on $[-1,1]$ in terms of an $N$:th degree polynomial is given by the Fourier-Legendre sum:
\[
S_N[f](x)=\sum_{n=0}^Na_n\tilde P_n(x), \textrm{where } \tilde P_n(x)=\frac{P_n(x)}{\|P_n\|_2},
\]
we can easily verify the following well-known lemma
\begin{lemma}\label{l:tailest}
Suppose that $\|f\|_{C^{0,\alpha}[-1,1]}\leq C_0$, then
\[
\|f-S_N[f]\|_2\leq\sqrt{2}A_0C_0\left(\frac{2}{N}\right)^\alpha,
\]
where $A_0$ is some absolute constant.
\end{lemma}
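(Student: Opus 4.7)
The plan is to reduce the claim to Jackson's theorem on polynomial approximation. Since $S_N[f]$ is precisely the orthogonal projection of $f$ onto the span of $P_0,\ldots,P_N$ in $L^2([-1,1])$, it is the best $L^2$-approximation of $f$ among polynomials of degree $\le N$. Hence for any polynomial $p_N$ of degree $\le N$,
\[
\|f - S_N[f]\|_2 \le \|f - p_N\|_2 \le \sqrt{2}\,\|f - p_N\|_\infty,
\]
the last inequality using that $\|1\|_{L^2([-1,1])} = \sqrt{2}$. So it suffices to exhibit a single degree-$N$ polynomial $p_N$ with $\|f-p_N\|_\infty$ of the right size.

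This is supplied by Jackson's theorem on $[-1,1]$: there is an absolute constant $A_0$ such that for every continuous $f$ on $[-1,1]$ one can find a polynomial $p_N$ of degree $\le N$ with
\[
\|f - p_N\|_\infty \;\le\; A_0\,\omega\!\left(f;\tfrac{1}{N}\right),
\]
where $\omega(f;\cdot)$ is the modulus of continuity of $f$. I would simply cite this from a standard reference such as Sansone or Alexits, which the paper already points to.

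Now I would use the a priori Hölder bound $\|f\|_{C^{0,\alpha}[-1,1]} \le C_0$, which gives $\omega(f;r) \le C_0 r^\alpha$ for all $r>0$. Plugging in $r = 1/N$ yields $\|f - p_N\|_\infty \le A_0 C_0 N^{-\alpha}$, and combining with the first display produces
\[
\|f - S_N[f]\|_2 \;\le\; \sqrt{2}\,A_0 C_0\, N^{-\alpha} \;\le\; \sqrt{2}\,A_0 C_0\left(\frac{2}{N}\right)^\alpha,
\]
since $\alpha \le 1$ so $2^\alpha \ge 1$; this absorbs any inessential constants into $A_0$ and matches the stated form.

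There is no real obstacle here — the entire argument is a two-line application of Jackson's theorem plus the optimality of the Fourier–Legendre partial sum as an $L^2$-projection. The only place one has to be slightly careful is in stating Jackson's theorem in a form that applies on $[-1,1]$ rather than on $\mathbb{T}$, but this is entirely standard and the factor of $(2/N)^\alpha$ in the conclusion is exactly what is needed to accommodate any constant coming from the rescaling.
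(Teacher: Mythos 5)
Your argument is correct and is essentially identical to the paper's proof: both use that $S_N[f]$ is the best $L^2$-approximation by degree-$N$ polynomials, invoke Jackson's theorem for the best uniform approximant, and apply the Hölder modulus-of-continuity bound, with only a cosmetic difference in whether Jackson's estimate is stated with $\omega(f;1/N)$ or $\omega(f;2/N)$.
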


\begin{proof}
\[
\|f-S_N[f]\|_2^2=\int_{-1}^1|f(x)-S_N[f](x)|^2\d x\leq\int_{-1}^1|f(x)-p_N(x)|^2\d x
\]
for any $N$:th degree polynomial $p_N$ since $S_N[f]$ is the best approximation in $L^2$. Let now in particular $p_N$ be the degree $N$ polynomial that is the best uniform approximation of $f$ on $[-1,1]$. By Jackson's inequality (Theorem 4.6.6 in \cite{Alexits}) it then follows that
\[
\|f(x)-p_N(x)\|_\infty\leq A_0\omega\left(f;\frac{2}{N}\right)\leq A_0C_0\left(\frac{2}{N}\right)^\alpha.
\]
Thus
\[
\|f-S_N[f]\|_2\leq A_0C_0\left(\frac{2}{N}\right)^\alpha\sqrt{\int_{-1}^1\d x}=\sqrt{2}A_0C_0\left(\frac{2}{N}\right)^\alpha.
\]
\end{proof}
In Szegö's book on orthogonal polynomials, \cite{Szego} one can also find better and more explicit bounds on $P_n$ for $n>0$, e.g.
\begin{equation}
(1-x^2)^{1/4}|P_n(x)|\leq\sqrt{\frac{2}{\pi n}}.\label{szegobd}
\end{equation}
Using \eqref{szegobd} together with $\|P_n\|_2=\sqrt{\frac{2}{2n+1}}$ (and $P_0(x)=1$) one can easily verify
\begin{lemma}\label{l:szbd}
If $P_n, n=0,1,\dots$ are Legendre polynomials over the interval $[-1,1]$, then the $L^2$-normalized Legendre polynomials $\tilde{P}_n$ satisfy
\[
|\tilde{P}_n(x)|\leq\begin{cases}2^{1/4}\sqrt{\frac{2n+1}{\pi n}}&,\quad n\geq1\\  \frac1{\sqrt{2}}&,\quad n=0\end{cases}
\]
for $|x|\leq 1/2$.
\end{lemma}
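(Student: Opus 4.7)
The plan is to handle the two cases $n=0$ and $n\geq 1$ separately, both being short computations from the data recalled just before the lemma statement.

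For $n=0$ the argument is immediate: $P_0(x)=1$ and $\|P_0\|_2 = \sqrt{2}$, hence $\tilde{P}_0(x) = 1/\sqrt{2}$ identically on $[-1,1]$, which gives the claimed bound (with equality) on $|x|\le 1/2$.

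For $n\ge 1$ I would combine the Szegő-type estimate \eqref{szegobd} with the explicit $L^2$-norm $\|P_n\|_2 = \sqrt{2/(2n+1)}$ to write
\[
|\tilde{P}_n(x)| \;=\; \frac{|P_n(x)|}{\|P_n\|_2} \;\le\; \frac{1}{(1-x^2)^{1/4}}\sqrt{\frac{2}{\pi n}}\,\sqrt{\frac{2n+1}{2}} \;=\; \frac{1}{(1-x^2)^{1/4}}\sqrt{\frac{2n+1}{\pi n}}.
\]
It then remains to estimate the factor $(1-x^2)^{-1/4}$ uniformly on $|x|\le 1/2$. Since $|x|\le 1/2$ implies $1-x^2 \ge 3/4 \ge 1/2$, one has $(1-x^2)^{-1/4} \le 2^{1/4}$, which supplies exactly the prefactor appearing in the stated bound.

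There is no real obstacle here; the entire content of the lemma is that the two recalled facts (Szegő's bound and the normalization constant) combine cleanly on the subinterval $|x|\le 1/2$ where the envelope $(1-x^2)^{-1/4}$ is harmlessly bounded. The only small check is the arithmetic $1-x^2 \ge 1/2$ on this subinterval, which is what fixes the constant $2^{1/4}$ in the final estimate.
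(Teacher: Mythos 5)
Your proof is correct and is exactly the computation the paper leaves to the reader: combine the Szegő bound \eqref{szegobd} with $\|P_n\|_2=\sqrt{2/(2n+1)}$ and note $(1-x^2)^{-1/4}\leq 2^{1/4}$ on $|x|\leq 1/2$, plus the trivial $n=0$ case. Nothing to add.
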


\bibliographystyle{alphanum}
\def\cprime{$'$}

\end{document}